\newtheorem{theorem}{Theorem}[section]
\newtheorem{lemma}[theorem]{Lemma}
\newtheorem{corollary}[theorem]{Corollary}
\newtheorem{proposition}[theorem]{Proposition}
\theoremstyle{definition}
\newtheorem{example}[theorem]{Example}
\theoremstyle{remark}
\newtheorem{remark}[theorem]{Remark}
\numberwithin{equation}{section}
\begin{document}

\title[Left-orderability]{Left-orderable fundamental group and Dehn surgery on genus one two-bridge knots}

\author{Ryoto Hakamata}
\address{Graduate School of Education, Hiroshima University,
1-1-1 Kagamiyama, Higashi-hiroshima, Japan 739-8524.}

\author{Masakazu Teragaito}
\address{Department of Mathematics and Mathematics Education, Hiroshima University,
1-1-1 Kagamiyama, Higashi-hiroshima, Japan 739-8524.}
\email{teragai@hiroshima-u.ac.jp}
\thanks{The second author is partially supported by Japan Society for the Promotion of Science,
Grant-in-Aid for Scientific Research (C), 22540088.
}%

\subjclass[2010]{Primary 57M25; Secondary 06F15}



\keywords{left-ordering, Dehn surgery, two-bridge knot}

\begin{abstract}
For any hyperbolic genus one $2$-bridge knot in the $3$-sphere,
we show that the resulting manifold by $r$-surgery on the knot has left-orderable
fundamental group if the slope $r$ lies in some range which depends on the knot.
\end{abstract}

\maketitle

\section{Introduction}

A non-trivial group $G$ is said to be \textit{left-orderable\/} if it admits
a strict total ordering which is invariant under left-multiplication.
Thus, if $g<h$ then $fg<fh$ for any $f,g,h\in G$.
Many fundamental groups of $3$-manifolds are known to be left-orderable.
For examples, all knot and link groups are left-orderable.
Boyer, Gordon and Watson \cite{BGW} propose a conjecture that
an irreducible rational homology $3$-sphere $Y$ is an $L$-space if and only if $\pi_1Y$ is not left-orderable. 
An $L$-space is a rational homology $3$-sphere whose Heegaard--Floer homology $\widehat{HF}(Y)$
has rank $|H_1(Y;\mathbb{Z})|$ (\cite{OS}).
Recently, $L$-spaces become the object of interest, and it is an open problem to characterize
$L$-spaces without mentioning Heegaard--Floer homology.
The affirmative answer to the above conjecture will give
an algebraic characterization of $L$-spaces.

On the other hand, 
irreducible rational homology spheres are obtained by Dehn surgery on knots
in the $3$-sphere $S^3$, in plenty.
For a knot $K$, we call
a slope $r$ \textit{left-orderable\/} if the resulting manifold $K(r)$ by $r$-surgery on $K$ has left-orderable fundamental group. 
In this paper, a slope is sometimes identified with its parameter in $\mathbb{Q}\cup\{1/0\}$.
In particular, the meridional slope corresponds to $1/0$.
It is known that any hyperbolic $2$-bridge knot does not admit
Dehn surgery yielding an $L$-space (\cite{OS}).
Hence any slope but $1/0$ is expected to be left-orderable for a hyperbolic $2$-bridge knot, if we support the above conjecture.
In this direction, Boyer, Gordon and Watson \cite{BGW} proved that for the figure-eight knot,
if $r\in (-4,4)$, then $r$ is left-orderable.
Later, Clay, Lidman and Watson \cite{CLW} showed $r=\pm 4$ are also left-orderable.
These results were extended to all hyperbolic twist knots \cite{HT0,HT1,T}.
Also, Tran \cite{Tr} further extended the range of left-orderable slopes for twist knots.

The purpose of this paper is to give
new ranges of
left-orderable slopes for all hyperbolic genus one, $2$-bridge knots.
As a by-product, we obtain a wider range of left-orderable slopes for (positive) twist knots than
that in \cite{HT1}.

For non-zero integers $m$ and $n$,
let $K(m,n)$ be the $2$-bridge knot $S(4mn+1,2m)$ in Schubert's normal form
as illustrated in Figure \ref{fig:knot}.
In Figure \ref{fig:knot},
the twists in the vertical box are left-handed (resp. right-handed) if $m>0$ (resp. $m<0$),
but those in the horizontal box are 
right-handed (resp. left-handed) if $n>0$ (resp. $n<0$).
Thus $K(1,-1)$ is the trefoil, and $K(1,1)$ is the figure-eight knot.
By symmetry, $K(m,n)$ and $K(-n,-m)$ are isotopic.
Except the trefoil, $K(m,n)$ is hyperbolic.
It is also well known that any genus one $2$-bridge knot is equivalent to $K(m,n)$
for some $m,n$ (see \cite{BZ}).

\begin{figure}[ht]
\includegraphics*[scale=0.4]{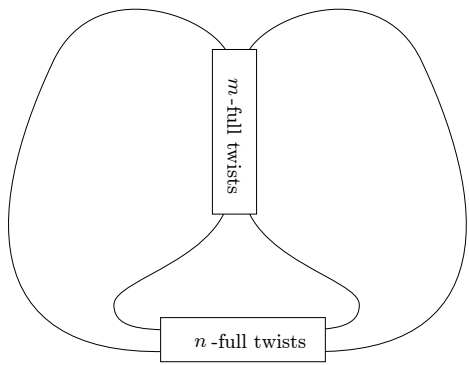}
\caption{The knot $K(m,n)$}\label{fig:knot}
\end{figure}

\begin{theorem}\label{thm:main}
Let $K=K(m,n)$ be a hyperbolic genus one $2$-bridge knot $S(4mn+1,2m)$ in the $3$-sphere $S^3$ as illustrated in Figure \ref{fig:knot}. 
Let $I$ be the interval defined by
\[I=
\begin{cases}
(-4n,4m) &\text{if $m>0$ and $n>0$},\\
(4m,-4n) & \text{if $m<0$ and $n<0$},\\
[0,\max\{4m,-4n\}) & \text{if $m>0$ and $n<0$},\\
(\min\{4m,-4n\},0] & \text{if $m<0$ and $n>0$}.
\end{cases}
\]
Then any slope in $I$ is left-orderable.
That is, $\pi_1(K(r))$ is left-orderable.
\end{theorem}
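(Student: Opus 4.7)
The plan is to show that for every $r \in I$, the group $\pi_1(K(m,n)(r))$ admits a nontrivial homomorphism to the universal covering group $\widetilde{PSL}_2(\mathbb{R})$, which is itself left-orderable. Because $K(m,n)$ is hyperbolic and $r \neq 1/0$, the surgered manifold is an irreducible rational homology $3$-sphere, so the standard Boyer--Rolfsen--Wiest criterion then upgrades the existence of such a representation to left-orderability of $\pi_1(K(m,n)(r))$ itself.

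I would begin by writing down the two-bridge presentation $\pi_1(S^3 \setminus K(m,n)) = \langle a, b \mid wa = bw \rangle$ read off from the Schubert form $S(4mn+1, 2m)$, and then parametrize the nonabelian $SL_2(\mathbb{C})$-representations in Riley's manner: $\rho(a)$ and $\rho(b)$ are conjugate matrices with common trace $y + y^{-1}$, and the trace $u$ of $\rho(ab)$ is a second parameter. The knot relation reduces to a single Riley-type polynomial equation $\Phi(y, u) = 0$. Restricting to the real locus on which $\rho$ takes values in $PSL_2(\mathbb{R})$ with elliptic meridian image, the longitude image is a rotation about the same fixed point, and expressing the two rotation angles in terms of $(y, u)$ via suitable Chebyshev-like polynomials gives an explicit continuous slope function $r(y, u)$ such that $\rho$ factors through $\pi_1(K(m,n)(r(y,u)))$.

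The core analytic step is to verify that, on an appropriate real arc of $\Phi = 0$, the slope function is surjective onto $I$, with the four sign cases of $(m, n)$ matching four distinct components or parameter ranges of the real character variety. One establishes continuity of $r(y, u)$, identifies its limiting values at the endpoints of the arc (which dictate whether $I$ is open or half-open), and applies the intermediate value theorem to cover every slope in $I$. The $PSL_2(\mathbb{R})$-representation is then lifted to $\widetilde{PSL}_2(\mathbb{R})$ by continuous path lifting from a basepoint on the arc where a canonical lift is evident, and the lift is transported along the arc to any prescribed slope.

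The main obstacle is the explicit bookkeeping: writing $w$ concretely for $K(m,n)$, reducing $wa = bw$ to a tractable Riley polynomial, and then proving monotonicity and surjectivity of the slope function on the correct real branch across all four sign cases, with careful control of the endpoint behavior to produce the open/half-open distinction in $I$. Once these ingredients are in hand, nontriviality of $\rho$ is automatic from the nonabelianity of its image, and left-orderability follows.
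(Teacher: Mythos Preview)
Your high-level strategy---build a one-parameter family of nonabelian real representations of the knot group, show the induced slope function surjects onto $I$, lift to the universal cover, and invoke Boyer--Rolfsen--Wiest---is the paper's strategy too. But two of your concrete choices diverge from what the paper actually does.

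First, you propose representations with \emph{elliptic} meridian image and read slopes from rotation angles. The paper instead uses \emph{hyperbolic} meridians: $\rho_s(x)$ is diagonal with entries $\sqrt{t},\,1/\sqrt{t}$ for some real $t>1$ depending on $s>0$, and $\rho_s(\mathcal{L})$ is then also diagonal with positive $(1,1)$-entry $B_s$. The slope function is $g(s)=-\log B_s/\log\sqrt{t}$, and explicit limit calculations give $g(s)\to 0$ as $s\to 0^+$ and $g(s)\to 4m$ as $s\to\infty$. Working on the hyperbolic locus makes both the peripheral computation (the image sits in the diagonal subgroup, essentially a copy of $\mathbb{R}_{>0}$) and the lifting step particularly clean: one can arrange that $\tilde\rho(\pi_1(\partial E(K)))$ lands in $(-1,1)\times\{0\}\subset\widetilde{SL_2(\mathbb{R})}$, so the surgery relation in the lift reduces to the single equation $A_s^pB_s^q=1$.

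Second---and this is where your sketch has a genuine gap---the paper does \emph{not} cover all of $I$ with a single family. The representation-theoretic argument only yields the open interval $(0,4m)$ (or $(4m,0)$) for $K(m,n)$. The other half comes from the symmetry that $K(n,m)$ is the mirror of $K(m,n)$: applying the same argument to $K(n,m)$ gives $(0,4n)$ there, hence $(-4n,0)$ for $K(m,n)$. The slope $r=0$ is handled separately, using that $K(0)$ has positive first Betti number. Your assertion that $K(r)$ is an irreducible rational homology sphere for every $r\neq 1/0$ is false at $r=0$, which is precisely why the endpoint $0$ in the mixed-sign cases of $I$ requires its own argument rather than falling out of the representation family. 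If you pursue the elliptic route you will still need either this mirror trick or a second arc of the real character variety, together with a separate treatment of $r=0$.
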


We remark that $K(1,-1)$ and $K(-1,1)$
are trefoils, so they are excluded in the statement here.

In a previous paper  \cite{HT1}, we showed that
any hyperbolic twist knot $K(1,n)\ (n\ne -1)$ admits a range $[0,4]$ of left-orderable slopes.
Theorem \ref{thm:main} gives a partial improvement of this result.

\begin{corollary}\label{cor}
Let $K=K(1,n)$ be a hyperbolic $n$-twist knot for $n>0$. 
If $I=(-4n,4]$, then
any slope in $I$ is left-orderable.
\end{corollary}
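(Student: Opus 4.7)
The plan is to deduce the corollary by combining Theorem \ref{thm:main} with the earlier result from \cite{HT1} that was recalled just before the corollary statement. Specializing Theorem \ref{thm:main} to the case $m=1$ and $n>0$ puts us in the first branch of the piecewise definition of $I$, so the theorem immediately supplies left-orderability for every slope in the open interval $(-4n,4m)=(-4n,4)$. This already handles all of the asserted interval except the single endpoint $r=4$.

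To pick up the endpoint, I would invoke the result from \cite{HT1} quoted in the paragraph preceding the corollary: for every hyperbolic twist knot $K(1,n)$ with $n\neq -1$, every slope in the closed interval $[0,4]$ is left-orderable. In particular $r=4$ is covered, and the union of $(-4n,4)$ with $\{4\}$ is precisely $(-4n,4]$, which is the interval $I$ appearing in the corollary.

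Thus the argument is essentially two lines: first quote Theorem \ref{thm:main} with $(m,n)=(1,n)$, $n>0$, to get the half-open interval $(-4n,4)$; then quote \cite{HT1} to append the endpoint $4$. There is no substantive obstacle, since all the work has been done elsewhere; the only thing to be careful about is verifying that $K(1,n)$ for $n>0$ is indeed hyperbolic (it is, because the trefoil corresponds to $(m,n)=(1,-1)$, which is excluded) so that Theorem \ref{thm:main} applies. In particular, no new group-theoretic or representation-theoretic computation is required for the corollary itself.
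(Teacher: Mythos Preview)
Your proof is correct and matches the paper's (implicit) argument: the paper does not write out a separate proof of the corollary, but the sentence immediately preceding it makes clear that the corollary is obtained exactly as you describe, by combining the interval $(-4n,4)$ from Theorem~\ref{thm:main} with the interval $[0,4]$ from \cite{HT1} to get $(-4n,4]$.
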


This range of Corollary \ref{cor}
coincides with that in \cite{Tr}.
Also, Tran informed us that he obtained a similar result to Theorem \ref{thm:main}.
We would like to thank Anh T. Tran for informing his result.

\section{Knot groups and Two sequences of polynomials}\label{sec:knotgroup}

Let $K=K(m,n)$ and let $G=\pi_1(S^3-K)$ be its knot group.
We always assume that $m\ne 0$ and $n\ne 0$, unless specified otherwise.

\begin{proposition}
The knot group $G$ admits a presentation
\[
G=\langle x,y \mid w^nx=yw^n \rangle,
\]
where
$x$ and $y$ are meridians and $w=(xy^{-1})^m(x^{-1}y)^m$.
Furthermore, the longitude $\mathcal{L}$ is given as $\mathcal{L}=w_*^nw^n$, where
$w_*=(yx^{-1})^m(y^{-1}x)^m$ is obtained from $w$ by reversing the order of letters.
\end{proposition}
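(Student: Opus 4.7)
My plan is to derive the presentation directly from the Wirtinger relations of the diagram in Figure~\ref{fig:knot}. Label the two overpass arcs at the top of the diagram by meridians $x$ and $y$. Since $K(m,n)$ has two bridges, the Wirtinger presentation collapses, after eliminating all crossing labels in terms of conjugates of $x$ and $y$, to a two-generator, one-relator presentation; the single surviving relation records how the two bridge strands must be identified once they have traversed the two twist boxes and closed up.

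The central computation is to read off the word produced by the twist regions. Passing through the vertical box of $m$ half-twists and coming back produces the standard "lightning-bolt" word
\[ w=(xy^{-1})^m(x^{-1}y)^m, \]
and each subsequent pass through the horizontal box then conjugates the labels by $w$. After the $n$ passes dictated by the horizontal twist region, the identification of the two bridge strands becomes $w^n x w^{-n}=y$, i.e., $w^n x = y w^n$, which is the desired relation.

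For the longitude I would trace a parallel push-off of $K$ from a basepoint on the arc labeled $x$. The standard $\pi$-rotational symmetry of a $2$-bridge diagram exchanges the two bridges, swaps $x$ and $y$, and sends $w$ to its letter-reversed word $w_* = (yx^{-1})^m(y^{-1}x)^m$. Under this symmetry the two halves of the push-off contribute $w^n$ and $w_*^n$ in succession, and a writhe check in Schubert's form $S(4mn+1,2m)$ confirms that $w_*^n w^n$ already has vanishing linking number with $K$, so no meridional correction is needed and $\mathcal{L}=w_*^n w^n$. The main obstacle is the careful bookkeeping of orientations and signs so that the formulas hold uniformly across all four sign combinations of $(m,n)$ and that the specific order $w_*^n w^n$, rather than $w^n w_*^n$, is forced by the choice of basepoint.
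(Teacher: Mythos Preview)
Your plan is sound in principle but takes a genuinely different route from the paper. The paper does \emph{not} work directly with the Wirtinger presentation of the diagram in Figure~\ref{fig:knot}. Instead it replaces $K(m,n)$ by a three-component surgery link: the first component is the eventual knot, and the second and third are unknots on which $1/m$-- and $-1/n$--surgery are performed. In that link one reads off $\lambda_2=x^{-1}y$ and $\lambda_3=yz^{-1}$ immediately, the surgery relations give $\mu_2=\lambda_2^{-m}$ and $\mu_3=\lambda_3^{n}$, and then the single relation $y=\mu_3^{-1}x\mu_3$ is manipulated algebraically into $w^n x=yw^n$. The longitude drops out of the link picture as $\mathcal{L}=\mu_3\mu_2\mu_3^{-1}\mu_2^{-1}$, and a short conjugation check gives $\mathcal{L}=w_*^n w^n$.

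The trade-off is this: your Wirtinger approach is the classical one for $2$-bridge knots and avoids introducing an auxiliary link, but the assertions you make---that the vertical box produces exactly $(xy^{-1})^m(x^{-1}y)^m$, that each horizontal pass conjugates by $w$, and that the push-off reads $w_*^n w^n$ with no meridional correction---are precisely the content of the proof, and you have stated them rather than derived them. The paper's surgery-link method trades that diagrammatic bookkeeping for a handful of algebraic substitutions, and in particular the longitude formula comes essentially for free from the commutator $\mu_3\mu_2\mu_3^{-1}\mu_2^{-1}$, whereas in your approach the symmetry argument and the writhe check are real work that you have only outlined. Either route is acceptable, but if you pursue yours you must actually carry out the arc-labelling through both twist regions rather than appealing to a ``standard lightning-bolt word''; that computation is where the specific exponents and the ordering $w_*^n w^n$ (not $w^n w_*^n$) are pinned down.
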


This is slightly different from that in \cite[Proposition 1]{HS},
but both are isomorphic.

\begin{proof}
We use a surgery diagram of $K$ as illustrated in Figure \ref{fig:pi1},
where $1/m$--surgery and $-1/n$--surgery are performed
along the second and third components, respectively.
Let $\mu_i$ and $\lambda_i$ be the meridian and longitude of the $i$th component.

\begin{figure}[ht]
\includegraphics*[scale=0.6]{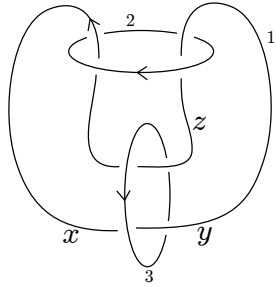}
\caption{A surgery diagram of $K$}\label{fig:pi1}
\end{figure}

First, $y=\mu_3^{-1}x\mu_3$, $z=\mu_2^{-1}y\mu_2$, $\lambda_2=x^{-1}y$ and $\lambda_3=yz^{-1}$.
By $1/m$--surgery on the second component yields a relation $\lambda_2^m\mu_2=1$, so $\mu_2=\lambda_2^{-m}$.
Similarly, $-1/n$--surgery on the third component gives $\mu_3=\lambda_3^{n}$. 
Thus
\begin{eqnarray*}
\lambda_3&=&y\mu_2^{-1}y^{-1}\mu_2=y\lambda_2^{m}y^{-1}\lambda_2^{-m}\\
&=&y(x^{-1}y)^my^{-1}(y^{-1}x)^m
=(yx^{-1})^m(y^{-1}x)^m.
\end{eqnarray*}
Then the relation $y=\mu_3^{-1}x\mu_3$ gives
\begin{equation}\label{eq:r1}
y[(x^{-1}y)^m(xy^{-1})^m]^{n}=[(x^{-1}y)^m(xy^{-1})^m]^{n}y.
\end{equation}

Set $w=(xy^{-1})^m(x^{-1}y)^m$.
Since $[(x^{-1}y)^m(xy^{-1})^m]^{n}=(x^{-1}y)^mw^n(x^{-1}y)^{-m}$,
(\ref{eq:r1}) changes to
$y(x^{-1}y)^mw^n(x^{-1}y)^{-m}=(x^{-1}y)^mw^n(x^{-1}y)^{-m}x$.
By $y(x^{-1}y)^m=(yx^{-1})^my$, we have
$(yx^{-1})^myw^n(x^{-1}y)^{-m}=(x^{-1}y)^mw^n(x^{-1}y)^{-m}x$.
Thus,
\begin{eqnarray*}
yw^n&=& (xy^{-1})^m(x^{-1}y)^mw^n(x^{-1}y)^{-m}x(x^{-1}y)^m\\
    &=& w^{n+1}(x^{-1}y)^{-m}x(x^{-1}y)^m\\
    &=& w^n(xy^{-1})^m(x^{-1}y)^m(x^{-1}y)^{-m}x(x^{-1}y)^m\\
    &=& w^n(xy^{-1})^mx(x^{-1}y)^m\\
    &=& w^nx.
\end{eqnarray*}

Set $w_*=(yx^{-1})^m(y^{-1}x)^m$.
Then $\lambda_3=w_*$.
The longitude is $\mathcal{L}=\mu_3\mu_2\mu_3^{-1}\mu_2^{-1}=w_*^n\mu_2w_*^{-n}\mu_2^{-1}$.
We have
\begin{eqnarray*}
\mu_2w_*^{-n}\mu_2^{-1}&=&(x^{-1}y)^{-m}[(yx^{-1})^m(y^{-1}x)^m]^{-n}(x^{-1}y)^{m}\\
&=&(x^{-1}y)^{-m}[(x^{-1}y)^m(xy^{-1})^m]^n(x^{-1}y)^{m}\\
&=&[(xy^{-1})^m(x^{-1}y)^m]^n\\
&=&w^n.
\end{eqnarray*}
Thus $\mathcal{L}=w_*^nw^n$.
\end{proof}

To describe the Riley polynomial of $K$ in Section  \ref{sec:riley}, we prepare
two sequences of polynomials with a single variable $s$.

For non-negative integer $m$,
let $f_m\in \mathbb{Z}[s]$ be defined by the recursion
\begin{equation}\label{eq:f}
f_{m+2}-(s+2)f_{m+1}+f_m=0
\end{equation}
with initial conditions $f_0=1$ and $f_1=s+1$.
Also,
let $g_m\in \mathbb{Z}[s]$ be defined by 
the same recursion
\begin{equation}\label{eq:g}
g_{m+2}-(s+2)g_{m+1}+g_m=0
\end{equation}
with slightly different initial conditions $g_0=1$ and $g_1=s+2$.
We remark that $g_m$ is equivalent to the Chebyshev polynomial of the second kind.

\begin{lemma}\label{lem:fg-formula}
The closed formulas for $f_m$ and $g_m$ are
\[
f_m=\sum_{i=0}^m \binom{m+i}{m-i}s^i,\quad  g_m=\sum_{i=0}^m\binom{m+1+i}{m-i}s^i.
\]
In particular, all coefficients of $f_m$ and $g_m$ are positive integers, and
the degree of $f_m$ and $g_m$ is $m$.
Also, $f_m$ and $g_m$ are monic.
\end{lemma}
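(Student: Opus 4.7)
The plan is to prove both closed formulas simultaneously by induction on $m$, exploiting the fact that $f_m$ and $g_m$ satisfy the same recursion and differ only in initial conditions. The base cases $m=0,1$ I would check by direct substitution into the two sums, which yield $1$ and $1+s$ for $f$, and $1$ and $2+s$ for $g$, matching the stated initial data.

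For the inductive step for $f_m$, I would substitute the proposed closed form into $f_{m+2}=(s+2)f_{m+1}-f_m$ and equate coefficients of $s^i$ on both sides. Setting $a=m+i$ and $b=m-i$, this reduces to verifying the binomial identity
\[
\binom{a+2}{b+2}=\binom{a}{b+2}+2\binom{a+1}{b+1}-\binom{a}{b},
\]
which I would prove with two applications of Pascal's rule: first expand $\binom{a+2}{b+2}=\binom{a+1}{b+2}+\binom{a+1}{b+1}$ and $\binom{a+1}{b+2}=\binom{a}{b+2}+\binom{a}{b+1}$, then substitute $\binom{a}{b+1}=\binom{a+1}{b+1}-\binom{a}{b}$. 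The inductive step for $g_m$ reduces to essentially the same identity, only with upper indices shifted by one. The edge cases (small $i$ or $i\in\{m+1,m+2\}$) I would handle uniformly by using the convention that $\binom{n}{k}=0$ when $k<0$ or $k>n$; with that convention the identity remains correct and so the comparison of coefficients goes through.

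The three remaining claims are then essentially free. Each binomial coefficient $\binom{m+i}{m-i}$ and $\binom{m+1+i}{m-i}$ is a positive integer for $0\le i\le m$, giving positive integer coefficients. The top terms at $i=m$ are $\binom{2m}{0}s^m=s^m$ and $\binom{2m+1}{0}s^m=s^m$, so both polynomials are monic of degree exactly $m$. I do not anticipate a serious obstacle; the only non-bookkeeping step is the Pascal-type identity above, and the rest of the argument is careful index tracking in the inductive step.
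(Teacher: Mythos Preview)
Your proposal is correct and takes essentially the same approach as the paper, which merely states ``These easily follow from the inductive argument by using the recursive formulas.'' You have supplied the details the paper omits, and the Pascal-type identity you isolate is exactly what makes the inductive step go through.
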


\begin{proof}
These easily follow from the inductive argument by using the recursive formulas.
\end{proof}

Set $f_{-m}=f_{m-1}$ for $m\ge 1$, and set $g_{-1}=0$ and $g_{-m}=-g_{m-2}$ for $m\ge 2$.
Thus we have defined $f_m$ and $g_m$ for any integer $m$.
In particular, the recursions (\ref{eq:f}) and (\ref{eq:g}) hold for all integers.

\begin{lemma}\label{lem:fg}
For any integer $m$, the polynomials $f_m$, $g_m$ satisfy the following relations.
\begin{itemize}
\item[(1)] $f_m+g_{m-1}=g_m$.
\item[(2)] $f_m+s g_m=f_{m+1}$.
\item[(3)] $f_m^2=s g_m g_{m-1}+1$.
\end{itemize}
\end{lemma}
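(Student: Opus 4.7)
My plan is to prove (1) and (2) by induction on $m$ and then deduce (3) from them by a telescoping argument. For (1), both sides are $\mathbb{Z}$-linear combinations of entries from the sequences $\{f_m\}$ and $\{g_m\}$, each of which satisfies the common recurrence $a_{m+2}=(s+2)a_{m+1}-a_m$. Hence the difference $(f_m+g_{m-1})-g_m$ also satisfies this recurrence, so it suffices to verify the identity at two consecutive integers. At $m=0,1$ we have $1+0=1=g_0$ and $(s+1)+1=s+2=g_1$, giving (1) for all $m\ge 0$ by forward induction. Because the recurrence is stated to hold on all of $\mathbb{Z}$, I can run it backwards from the same base cases to extend the identity to $m\le 0$. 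Identity (2) follows by the same template, with base cases $f_0+sg_0=1+s=f_1$ and $f_1+sg_1=(s+1)+s(s+2)=s^2+3s+1=f_2$.

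For (3), set $D_m=f_m^2-sg_m g_{m-1}$. Directly, $D_0=1-s\cdot 1\cdot 0=1$. The key step is to show $D_{m+1}=D_m$. I write
\[
D_{m+1}-D_m=(f_{m+1}-f_m)(f_{m+1}+f_m)-s g_m(g_{m+1}-g_{m-1}).
\]
From (2) I get $f_{m+1}-f_m=sg_m$, and from (1) applied at $m+1$ and $m$,
\[
g_{m+1}-g_{m-1}=(g_{m+1}-g_m)+(g_m-g_{m-1})=f_{m+1}+f_m.
\]
Substituting makes the two summands cancel, so $D_{m+1}=D_m$ for every integer $m$. Combined with $D_0=1$, this gives $D_m=1$ for all $m\in\mathbb{Z}$, which is (3).

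The only thing one has to be careful about is the bookkeeping for the shifted extensions $f_{-m}=f_{m-1}$, $g_{-1}=0$, $g_{-m}=-g_{m-2}$. Since the excerpt already certifies that the recurrences hold on all of $\mathbb{Z}$, the backward induction used to extend (1) and (2) to negative $m$ is automatic, and the telescoping in (3) then propagates in both directions from $D_0=1$. I therefore expect no substantive obstacle beyond a clean verification of the two base cases for each identity.
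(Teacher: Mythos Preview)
Your proof is correct and follows essentially the same approach as the paper: induction for (1) and (2), then an inductive step for (3) that relies on (1) and (2). Your telescoping computation $D_{m+1}-D_m=0$ is a slightly slicker variant of the paper's direct expansion of $f_{m+1}^2=(f_m+sg_m)^2$, but the underlying idea is the same.
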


\begin{proof}
These are easily proved by induction.
We prove (3) only here.
Clearly, it holds when $m=0$.
Assume $f_m^2=sg_mg_{m-1}+1$.
From (1) and (2),
\begin{eqnarray*}
f_{m+1}^2&=& (f_m+sg_m)^2\\
&=& f_m^2+2sf_mg_m+s^2g_m^2\\
&=& (sg_mg_{m-1}+1)+2sf_mg_m+s^2g_m^2\\
&=& (sg_mg_{m-1}+1)+2s(g_m-g_{m-1})g_m+s^2g_m^2\\
&=& (s^2+2s)g_m^2-sg_mg_{m-1}+1\\
&=& s((s+2)g_m-g_{m-1})g_m+1\\
&=& sg_{m+1}g_m+1.
\end{eqnarray*}
Similarly, $f_{m-1}^2=(f_{m}-sg_{m-1})^2=sg_{m-1}g_{m-2}+1$.
\end{proof}

\begin{lemma}\label{lem:fg-ineq}
If a positive real number is substituted to $s$, then
we have the following inequalities.
\begin{itemize}
\item[(1)] For $m>0$, $sg_{m-1}<4f_m$.
\item[(2)] For $m<0$, $-sg_{m-1}<4f_{m-1}$.
\item[(3)] For $m<0$, $-2sg_{m-1}<3f_{m-1}$.
\end{itemize}
\end{lemma}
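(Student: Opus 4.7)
The plan is to reduce all three inequalities to a single class of statements with positive indices, and then treat them by a short induction that exploits the common linear recursion satisfied by $f_m$ and $g_m$. For (2), set $m=-k$ with $k\ge 1$; the defining relations $f_{-j}=f_{j-1}$ and $g_{-j}=-g_{j-2}$ (the latter valid for $j\ge 2$) applied with $j=k+1$ give $f_{m-1}=f_k$ and $g_{m-1}=-g_{k-1}$, so (2) becomes $sg_{k-1}<4f_k$ for $k\ge 1$, which is exactly statement (1) of the lemma. The same substitution rewrites (3) as $2sg_{k-1}<3f_k$ for $k\ge 1$. Thus it suffices to prove inequalities of the shape $af_m>bsg_{m-1}$ for all $m\ge 1$, with $(a,b)=(4,1)$ and $(a,b)=(3,2)$.

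For either pair, set $h_m=af_m-bsg_{m-1}$. Since $f_m$ and $g_{m-1}$ each satisfy the recursion (\ref{eq:f}), so does $h_m$:
\[
h_{m+2}=(s+2)h_{m+1}-h_m.
\]
A direct computation from the initial data $f_0,f_1,f_2,g_0,g_1$ yields $h_1,h_2>0$ and $h_2>h_1$ for all $s>0$ in both cases (for instance, $h_1=3s+4$ and $h_2=3s^2+10s+4$ in case (1)). I would then run one induction with the strengthened invariant ``$h_{m+1}>h_m>0$''. Given the invariant at stage $m$, the recursion together with $s>0$ gives
\[
h_{m+2}=(s+2)h_{m+1}-h_m > 2h_{m+1}-h_m > h_{m+1} > 0,
\]
which delivers the invariant at stage $m+1$ and completes the induction.

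The main obstacle I anticipate is the negative sign in the recursion $h_{m+2}=(s+2)h_{m+1}-h_m$: pure positivity of consecutive terms is not a closed condition under this recursion, so propagating ``$h_m>0$'' alone is hopeless. Strengthening the inductive hypothesis to carry the monotonicity $h_{m+1}>h_m$ is the decisive trick, and it works precisely because the chosen coefficients $(a,b)=(4,1)$ and $(a,b)=(3,2)$ make the base-case excess $h_2-h_1$ come out positive (as $3s^2+7s$ and $s^2+4s$ respectively) for all $s>0$.
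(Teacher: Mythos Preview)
Your argument is correct. The reduction of (2) and (3) to positive-index statements matches the paper, and your induction with the strengthened invariant $h_{m+1}>h_m>0$ does close up, since the base cases you compute are right and $(s+2)h_{m+1}-h_m>2h_{m+1}-h_m>h_{m+1}$ when $s>0$.

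The paper's route is different and shorter: it invokes the identity $sg_{m-1}=f_m-f_{m-1}$ from Lemma~\ref{lem:fg}(2), which eliminates $g$ entirely. Then $4f_m-sg_{m-1}=3f_m+f_{m-1}$ and $3f_{m-1}+2sg_{m-1}=2f_m+f_{m-1}$, and both are positive because every $f_j$ has positive coefficients. So the paper gets all three parts in one line each, with no induction needed. Your approach is self-contained (it does not rely on Lemma~\ref{lem:fg}) and shows how to handle such inequalities when no convenient identity is at hand; the cost is the extra bookkeeping of carrying the monotonicity hypothesis through the induction, plus separate base-case checks for each pair $(a,b)$.
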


\begin{proof}
(1) Since $sg_{m-1}=f_m-f_{m-1}$ by Lemma \ref{lem:fg},
this follows from $3f_m+f_{m-1}>0$.
(2) is equivalent to (1).
(3) From $sg_{m-1}=f_m-f_{m-1}$,
it follows from $2f_m+f_{m-1}>0$.
\end{proof}


\section{Riley polynomials}\label{sec:riley}

In this section, we calculate the Riley polynomial of $K$.
Let $s$ and $t$ be real numbers such that $s>0$ and $t>1$.
Let $\rho: G\to SL_2(\mathbb{R})$ be a representation of $G$
defined by
\[
\rho(x)=\begin{pmatrix}
\sqrt{t} & 1/\sqrt{t} \\
0 & 1/\sqrt{t}\\
\end{pmatrix},
\quad
\rho(y)=\begin{pmatrix}
\sqrt{t} & 0 \\
-s \sqrt{t} & 1/\sqrt{t}\\
\end{pmatrix}.
\]
By \cite{Ri}, $\rho$ gives a non-abelian representation
if $s$ and $t$ are a pair of solutions of the Riley polynomial.
Recall that $w=(xy^{-1})^m(x^{-1}y)^m$.
Let $W=\rho(w)$ and $z_{i,j}$ be the $(i,j)$-entry of $W^n$.
Then the Riley polynomial of $K$ is given by $\phi_K(s,t)=z_{1,1}+(1-t)z_{1,2}$.
(See also \cite{DHY}.)

Since $s$ and $t$ are limited to be positive real numbers in our setting, it is not obvious that
there exist solutions for Riley's equation $\phi_K(s,t)=0$.
However, this will be verified in Proposition \ref{prop:root} under some condition.
In fact, we can choose $t$ so that $t>1$ and
$s+2<t+1/t<s+2+4/(sg_{m-1}^2)$ for any $s>0$.
We temporarily assume that $s$ and $t$ are chosen to satisfy $\phi_K(s,t)=0$.
See Example \ref{ex:riley} when $n=\pm 1$.

\begin{proposition}\label{prop:w-entry}
For $W=\rho(w)$, we have
\[
W=\begin{pmatrix}
f_m^2-st g_{m-1}^2 & f_{m-1}g_{m-1}-\frac{f_mg_{m-1}}{t}\\
sf_mg_{m-1}-stf_{m-1}g_{m-1} & f_{m-1}^2-\frac{s}{t}g_{m-1}^2
\end{pmatrix}.
\]
\end{proposition}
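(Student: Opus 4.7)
The plan is to reduce the $m$-th power to a direct matrix multiplication by first finding a closed form for each of $\rho(xy^{-1})^m$ and $\rho(x^{-1}y)^m$, and then multiplying the two $2\times 2$ matrices.

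First I would compute $\rho(xy^{-1})$ and $\rho(x^{-1}y)$ explicitly from the definition of $\rho$. The result is
\[
\rho(xy^{-1})=\begin{pmatrix} 1+s & 1\\ s & 1\end{pmatrix},\qquad
\rho(x^{-1}y)=\begin{pmatrix} 1+s & -1/t\\ -st & 1\end{pmatrix}.
\]
Both matrices have trace $s+2$ and determinant $1$, so by Cayley--Hamilton each satisfies $A^{2}=(s+2)A-I$. Because the defining recursion of $g_m$ is exactly $g_{m+1}=(s+2)g_m-g_{m-1}$ with $g_0=1$ and $g_{-1}=0$, a one-line induction (valid for all integers $m$, thanks to the extended definitions $g_{-1}=0$, $g_{-m}=-g_{m-2}$) gives the standard closed form
\[
A^{m}=g_{m-1}(s)\,A - g_{m-2}(s)\,I.
\]

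Applying this to each factor and simplifying the diagonal entries via Lemma~\ref{lem:fg}: the $(1,1)$-entry becomes $(1+s)g_{m-1}-g_{m-2}=sg_{m-1}+f_{m-1}=f_m$ (using (1) then (2)), and the $(2,2)$-entry becomes $g_{m-1}-g_{m-2}=f_{m-1}$ (using (1)). Thus
\[
\rho(xy^{-1})^m=\begin{pmatrix} f_m & g_{m-1}\\ sg_{m-1} & f_{m-1}\end{pmatrix},\quad
\rho(x^{-1}y)^m=\begin{pmatrix} f_m & -g_{m-1}/t\\ -stg_{m-1} & f_{m-1}\end{pmatrix}.
\]
Multiplying these two matrices entry-by-entry yields exactly the matrix claimed in the statement.

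There is no genuine obstacle; the proof is essentially bookkeeping. The only points requiring care are (a) verifying that the closed form $A^{m}=g_{m-1}A-g_{m-2}I$ really does extend to negative $m$ under the paper's extension of $g_m$, which is immediate from the fact that the recursion (\ref{eq:g}) now holds for all integers, and (b) using the correct identities from Lemma~\ref{lem:fg} to convert the $g_{m-2}$ terms coming out of Cayley--Hamilton into the clean $f_m, f_{m-1}, g_{m-1}$ form displayed above. Once those simplifications are in place, the final $2\times 2$ product is a routine computation.
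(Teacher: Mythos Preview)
Your proof is correct, and the route is genuinely different from the paper's. The paper proves the formula by a direct induction on $m$: writing $w_{m+1}=(xy^{-1})\,w_m\,(x^{-1}y)$ and $w_{m-1}=(yx^{-1})\,w_m\,(y^{-1}x)$, it multiplies the known matrix $W_m$ on the left and right by $\rho(xy^{-1})$ and $\rho(x^{-1}y)$ (respectively $\rho(yx^{-1})$ and $\rho(y^{-1}x)$), then verifies the four resulting entries one by one with Lemma~\ref{lem:fg}. Your approach instead splits $W$ into its two factors $\rho(xy^{-1})^m$ and $\rho(x^{-1}y)^m$, observes that each factor has trace $s+2$ and determinant $1$, and invokes Cayley--Hamilton to obtain the closed form $A^m=g_{m-1}A-g_{m-2}I$ valid for all integers $m$; the identities of Lemma~\ref{lem:fg} then collapse each factor to a clean $2\times 2$ matrix in $f_m,f_{m-1},g_{m-1}$, and one final multiplication finishes. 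Your method is somewhat more conceptual in that it explains \emph{why} the Chebyshev-type sequence $g_m$ appears (it encodes powers of a trace-$(s+2)$, determinant-$1$ matrix), and it replaces an entry-by-entry inductive verification with a single $2\times 2$ product; the paper's induction, on the other hand, avoids having to write down the intermediate closed forms for the two separate $m$-th powers. Both arguments rely on exactly the same identities from Lemma~\ref{lem:fg}, just applied at different stages.
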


\begin{proof}
We prove by induction on $m$.
(For the knot $K(m,n)$, we assume $m\ne 0$.
However, this proposition holds even for $m=0$.)

If $m=0$, then $w=1$, so $W$ is the identity matrix.
It is easy to check that the claim holds.

Assume the conclusion for $m$.
Note 
\[
\rho(xy^{-1})= 
\begin{pmatrix}
s+1 & 1 \\
s & 1
\end{pmatrix},\ 
\rho(x^{-1}y)=\begin{pmatrix}
s+1 & -1/t \\
-st & 1
\end{pmatrix}.
\]
Calculate the product
\[
\begin{pmatrix}
s+1 & 1 \\
s & 1
\end{pmatrix}
\begin{pmatrix}
f_m^2-st g_{m-1}^2 & f_{m-1}g_{m-1}-\frac{f_mg_{m-1}}{t}\\
sf_mg_{m-1}-stf_{m-1}g_{m-1} & f_{m-1}^2-\frac{s}{t}g_{m-1}^2
\end{pmatrix} 
\begin{pmatrix}
s+1 & -1/t \\
-st & 1
\end{pmatrix}.
\]
By using Lemma \ref{lem:fg},
each entry is identified as desired.
For example, the $(1,1)$-entry
is
\begin{eqnarray*}
(f_m+s(f_m+g_{m-1}))^2-st(f_{m-1}+g_{m-1}+sg_{m-1})^2&=& (f_m+sg_m)^2-st(f_{m-1}+(s+1)g_{m-1})^2\\
&=& f_{m+1}^2-st(f_m+g_{m-1})^2\\
&=&f_{m+1}^2-stg_m^2.
\end{eqnarray*}

Similarly,
\[
\rho(yx^{-1})=\begin{pmatrix}
1 & -1\\
-s & s+1
\end{pmatrix},\ 
\rho(y^{-1}x)=\begin{pmatrix}
1 &  1/t\\
st & s+1
\end{pmatrix}.
\]
Then, calculating the product 
\[
\begin{pmatrix}
1 & -1 \\
-s & s+1
\end{pmatrix}
\begin{pmatrix}
f_m^2-st g_{m-1}^2 & f_{m-1}g_{m-1}-\frac{f_mg_{m-1}}{t}\\
sf_mg_{m-1}-stf_{m-1}g_{m-1} & f_{m-1}^2-\frac{s}{t}g_{m-1}^2
\end{pmatrix} 
\begin{pmatrix}
1 & 1/t \\
st & s+1
\end{pmatrix}
\]
gives the conclusion again.
\end{proof}

Let $\lambda_\pm\in \mathbb{C}$ be the eigenvalues of $W$.

\begin{lemma}\label{lem:eigenvalue}
$\lambda_+\ne\lambda_-$.
\end{lemma}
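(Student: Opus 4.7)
The plan is to reduce the claim to a trace estimate and then verify it directly from the matrix entries given in Proposition~\ref{prop:w-entry} together with the identities in Lemma~\ref{lem:fg}. Since $\rho$ takes values in $SL_2(\mathbb{R})$, we have $\det W=1$, so the two eigenvalues of $W$ coincide if and only if $\mathrm{tr}\,W=\pm 2$. Hence it suffices to show $\mathrm{tr}\,W\in(-2,2)$ under the standing assumptions $s>0$, $t>1$, and $s+2<t+1/t<s+2+4/(sg_{m-1}^2)$.

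First I would read off from Proposition~\ref{prop:w-entry} that
\[
\mathrm{tr}\,W=f_m^2+f_{m-1}^2-sg_{m-1}^2\Bigl(t+\frac{1}{t}\Bigr).
\]
Applying Lemma~\ref{lem:fg}(3) to each of $f_m^2$ and $f_{m-1}^2$ yields
\[
f_m^2+f_{m-1}^2=sg_{m-1}(g_m+g_{m-2})+2,
\]
and the recursion (\ref{eq:g}) rewritten as $g_m+g_{m-2}=(s+2)g_{m-1}$ simplifies this to $s(s+2)g_{m-1}^2+2$. Substituting back gives the compact identity
\[
\mathrm{tr}\,W-2=sg_{m-1}^2\Bigl(s+2-t-\frac{1}{t}\Bigr).
\]

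Since $m\neq 0$ and $s>0$, Lemma~\ref{lem:fg-formula} together with the conventions $g_{-1}=0$ and $g_{-k}=-g_{k-2}$ for $k\geq 2$ guarantees $g_{m-1}(s)\neq 0$, so $sg_{m-1}^2>0$. The hypothesis on $t+1/t$ is equivalent to $-4/(sg_{m-1}^2)<s+2-t-1/t<0$, and multiplying through by the positive quantity $sg_{m-1}^2$ gives $-4<\mathrm{tr}\,W-2<0$. Thus $\mathrm{tr}\,W\in(-2,2)$, so in particular $\mathrm{tr}\,W\neq\pm 2$ and $\lambda_+\neq\lambda_-$.

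I do not anticipate any serious obstacle: the argument is essentially a bookkeeping computation combining Proposition~\ref{prop:w-entry}, the quadratic identity in Lemma~\ref{lem:fg}(3), and the $g$-recursion, followed by a direct substitution into the prescribed range for $t+1/t$. The only minor point worth verifying is the non-vanishing of $g_{m-1}$ at positive $s$ when $m\neq 0$, which is immediate from the closed formula and the negative-index conventions.
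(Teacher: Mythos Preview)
Your proof is correct and follows essentially the same route as the paper's: both reduce to showing $\mathrm{tr}\,W\neq\pm2$, derive the identity $\mathrm{tr}\,W=s(s+2-t-1/t)g_{m-1}^2+2$ from Proposition~\ref{prop:w-entry} and Lemma~\ref{lem:fg}, and then invoke the standing constraints $s+2<t+1/t<s+2+4/(sg_{m-1}^2)$. The only cosmetic difference is that you obtain $-2<\mathrm{tr}\,W<2$ in one stroke, whereas the paper first shows $\mathrm{tr}\,W<2$ and then separately rules out $\mathrm{tr}\,W=-2$ by contradiction.
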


\begin{proof}
Since $\det W=1$, $\lambda_+\lambda_-=1$.
If $\lambda_+=\lambda_-$, then $\lambda_+=\lambda_-=\pm 1$.
Then $\mathrm{tr}W=\pm 2$.

On the other hand, 
$\mathrm{tr}W=f_m^2+f_{m-1}^2-s(t+1/t)g_{m-1}^2=s(s+2-t-1/t)g_{m-1}^2+2$ by Proposition \ref{prop:w-entry} and Lemma \ref{lem:fg}.
Since $g_{m-1}\ne 0$, $s>0$ and $s+2<t+1/t$,
$\mathrm{tr}W<2$.
If $\mathrm{tr}W=-2$, then
$s(t+1/t-s-2)g_{m-1}^2=4$, so $t+1/t=s+2+4/(sg_{m-1}^2)$.
But this is impossible by our choice of $t$, remarked before Proposition \ref{prop:w-entry}.
Hence $\lambda_+\ne \lambda_-$.
\end{proof}

Let $w_{i,j}$ be the $(i,j)$-entry of $W$, and
let $P=\begin{pmatrix}
w_{1,2} & w_{1,2} \\
\lambda_+-w_{1,1} & \lambda_--w_{1,1}
\end{pmatrix}$.

\begin{lemma}\label{lem:w12}
$w_{1,2}\ne 0$.
\end{lemma}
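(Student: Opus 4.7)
The plan is to argue by contradiction, exploiting the relationship between $w_{1,2}$ and $w_{2,1}$ together with the constraint on $t$ recorded before Proposition \ref{prop:w-entry}.

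From Proposition \ref{prop:w-entry}, I would first factor
\[
w_{1,2} = \frac{g_{m-1}}{t}\bigl(t f_{m-1} - f_m\bigr), \qquad w_{2,1} = -s\, g_{m-1}\bigl(t f_{m-1} - f_m\bigr),
\]
so that $w_{2,1} = -st\, w_{1,2}$. Since $m\ne 0$, the closed formula of Lemma \ref{lem:fg-formula} (together with the definition $g_{-1}=0$, $g_{-m}=-g_{m-2}$) gives $g_{m-1}\ne 0$ for every positive value of $s$. Consequently, if $w_{1,2}=0$, then also $w_{2,1}=0$, so $W$ must be a diagonal matrix with real entries.

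In that case the two diagonal entries $w_{1,1}, w_{2,2}$ are the eigenvalues $\lambda_\pm$ of $W$, and in particular they are real. By Lemma \ref{lem:eigenvalue} they are distinct, and since $\det W = 1$ we have $\lambda_+\lambda_-=1$. The trace computation appearing in the proof of Lemma \ref{lem:eigenvalue}, namely
\[
\mathrm{tr}\, W = s\bigl(s+2 - t - 1/t\bigr)g_{m-1}^{\,2} + 2,
\]
together with the assumption $s+2<t+1/t$ forces $\mathrm{tr}\,W<2$. But two distinct real numbers with product $1$ and sum strictly less than $2$ must both be negative, and then AM--GM gives sum strictly less than $-2$, i.e.\ $\mathrm{tr}\,W<-2$.

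Translating $\mathrm{tr}\,W<-2$ back through the above trace identity yields $t+1/t > s+2 + 4/(s g_{m-1}^{\,2})$, which directly contradicts the standing choice of $t$ recorded before Proposition \ref{prop:w-entry}. Hence $w_{1,2}\ne 0$. The only step requiring any real care is verifying that $g_{m-1}\ne 0$ for all admissible $m$ and all $s>0$; once that is in hand, the rest is a clean chain of implications, so I do not expect a significant obstacle.
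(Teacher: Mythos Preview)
Your argument is correct, but it follows a different path from the paper's. The paper proceeds more directly: from $w_{1,2}=0$ and $g_{m-1}\ne 0$ it extracts $f_m=tf_{m-1}$, feeds this into the recursion $f_m-(s+2)f_{m-1}+f_{m-2}=0$ to obtain $t=s+2-f_{m-2}/f_{m-1}$, and then gets a contradiction with the lower bound $s+2<T$. Your route instead leverages structure already established: you observe (anticipating what the paper later records as Lemma~\ref{lem:w21}) that $w_{2,1}=-st\,w_{1,2}$, so $W$ would be diagonal; then Lemma~\ref{lem:eigenvalue} and AM--GM force $\mathrm{tr}\,W<-2$, contradicting the \emph{upper} bound $T<s+2+4/(sg_{m-1}^2)$. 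The paper's argument is more self-contained (it does not appeal to Lemma~\ref{lem:eigenvalue}) and uses only one side of the standing inequality on $T$, whereas yours is cleaner conceptually but draws on both the earlier lemma and the full two-sided constraint. Either way the verification that $g_{m-1}\ne 0$ for all admissible $m$ and all $s>0$ is the only point needing care, and you handle it correctly.
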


\begin{proof}
Assume $w_{1,2}=0$.
By Proposition \ref{prop:w-entry},
$w_{1,2}=f_{m-1}g_{m-1}-f_mg_{m-1}/t$.
Since $m\ne 0$, $g_{m-1}\ne 0$.
Hence $f_m=tf_{m-1}$.

From the recursion (\ref{eq:f}),
$tf_{m-1}-(s+2)f_{m-1}+f_{m-2}=0$.
Since $f_{m-1}\ne 0$,
we have $t=s+2-f_{m-2}/f_{m-1}$.
Thus $t<s+2$, contradicting our choice of $t$ again.
\end{proof}

The next will be used in Section \ref{sec:longi}.

\begin{lemma}\label{lem:w21}
$w_{2,1}=-st w_{1,2}$.
Hence $w_{2,1}\ne 0$.
\end{lemma}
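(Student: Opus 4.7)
The plan is to read off both entries $w_{1,2}$ and $w_{2,1}$ directly from the explicit matrix formula for $W$ given in Proposition \ref{prop:w-entry} and verify the claimed proportionality by a one-line algebraic manipulation.

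More precisely, Proposition \ref{prop:w-entry} tells us that
\[
w_{1,2}=f_{m-1}g_{m-1}-\frac{f_m g_{m-1}}{t}=g_{m-1}\left(f_{m-1}-\frac{f_m}{t}\right),
\]
while
\[
w_{2,1}=s f_m g_{m-1}-st f_{m-1}g_{m-1}=-st\, g_{m-1}\left(f_{m-1}-\frac{f_m}{t}\right).
\]
Comparing the two expressions immediately yields $w_{2,1}=-st\, w_{1,2}$. Since $s,t>0$, the factor $-st$ is nonzero, so $w_{2,1}\neq 0$ follows at once from $w_{1,2}\neq 0$, which has already been established in Lemma \ref{lem:w12}.

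There is no real obstacle here: once Proposition \ref{prop:w-entry} is in hand, the identity is a transparent common-factor extraction. The only small point worth noting is that we do use the standing assumption $m\neq 0$ implicitly through Lemma \ref{lem:w12} (which is where $g_{m-1}\neq 0$ and the choice of $t$ are actually needed); no new induction or case analysis on $m$ is required at this stage.
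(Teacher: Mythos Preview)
Your proof is correct and follows exactly the same approach as the paper, which simply states that the result ``immediately follows from Proposition \ref{prop:w-entry} and Lemma \ref{lem:w12}.'' You have merely spelled out the one-line factorization that the paper leaves to the reader.
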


\begin{proof}
This immediately follows from Proposition \ref{prop:w-entry}
and Lemma \ref{lem:w12}.
\end{proof}

By Lemmas \ref{lem:eigenvalue} and \ref{lem:w12},
$\det P=-w_{1,2}(\lambda_+-\lambda_-)\ne 0$.
A direct calculation shows
$P^{-1}WP=\begin{pmatrix}
\lambda_+  & 0\\
0 & \lambda_-
\end{pmatrix}$.

Set $\tau=\mathrm{tr}W$.
For any positive integer $k$,
let $\tau_k=\tau^{k-1}+\tau^{k-3}+\dots+\tau^{3-k}+\tau^{1-k}$.
If $\lambda_+\ne \lambda_-$, then $\tau_k=(\lambda_+^k-\lambda_-^k)/(\lambda_+-\lambda_-)$.
Let $\tau_0=0$ and $\tau_{-k}=-\tau_k$.
Then a recursion
$\tau_{k+1}-\tau\tau_k+\tau_{k-1}=0$
holds for any integer $k$.

\begin{lemma}\label{lem:wn}
For $W^n=\rho(w^n)$, we have
\[
W^n=
\begin{pmatrix}
w_{1,1}\tau_n-\tau_{n-1} & w_{1,2}\tau_n \\
w_{2,1}\tau_n & \tau_{n+1}-w_{1,1}\tau_n
\end{pmatrix}.
\]
\end{lemma}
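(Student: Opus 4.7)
The plan is to use the Cayley--Hamilton theorem to reduce every power of $W$ to a linear combination of $W$ and $I$. Since $\det W = 1$ and $\operatorname{tr} W = \tau$, the matrix $W$ satisfies $W^2 - \tau W + I = 0$, equivalently $W^2 = \tau W - I$.

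The main step is to prove by induction on $n$ that
\[
W^n = \tau_n W - \tau_{n-1} I
\]
for every integer $n$. For the base cases, $n=1$ gives $\tau_1 W - \tau_0 I = W - 0 = W$, and $n=0$ gives $\tau_0 W - \tau_{-1} I = 0 + I = I$ (using $\tau_{-1} = -\tau_1 = -1$). For the inductive step $n \to n+1$, multiply the hypothesis by $W$ and apply Cayley--Hamilton:
\[
W^{n+1} = \tau_n W^2 - \tau_{n-1} W = \tau_n(\tau W - I) - \tau_{n-1} W = (\tau \tau_n - \tau_{n-1}) W - \tau_n I = \tau_{n+1} W - \tau_n I,
\]
where the last equality uses the recursion $\tau_{n+1} = \tau \tau_n - \tau_{n-1}$ stated just before the lemma. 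The step $n \to n-1$ is analogous, multiplying instead by $W^{-1} = \tau W - W^2$ (or equivalently running the recursion backwards with $\tau_{-k} = -\tau_k$).

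Finally, read off the entries of $\tau_n W - \tau_{n-1} I$. The off-diagonal entries are $w_{1,2}\tau_n$ and $w_{2,1}\tau_n$, and the $(1,1)$ entry is $w_{1,1}\tau_n - \tau_{n-1}$, already matching the stated form. For the $(2,2)$ entry, use $w_{2,2} = \tau - w_{1,1}$ (from $\operatorname{tr} W = \tau$) to rewrite
\[
w_{2,2}\tau_n - \tau_{n-1} = (\tau - w_{1,1})\tau_n - \tau_{n-1} = (\tau \tau_n - \tau_{n-1}) - w_{1,1}\tau_n = \tau_{n+1} - w_{1,1}\tau_n,
\]
which is exactly the $(2,2)$ entry claimed.

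There is no real obstacle here: the argument is entirely formal once one observes that the $\tau_k$ satisfy the same second-order linear recursion as the powers of $W$ modulo Cayley--Hamilton. The only mild point to mind is that the formula must hold for all integers, including negatives, which is why the recursion for $\tau_k$ was set up symmetrically (with $\tau_0 = 0$ and $\tau_{-k} = -\tau_k$) at the start of the paragraph preceding the lemma.
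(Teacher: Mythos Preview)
Your argument is correct and is genuinely different from the paper's. The paper diagonalizes $W$ via the change-of-basis matrix $P$ (which is why Lemmas~\ref{lem:eigenvalue} and~\ref{lem:w12} were needed first, to guarantee $\det P\ne 0$), computes $W^n = P\,\mathrm{diag}(\lambda_+^n,\lambda_-^n)\,P^{-1}$, and then rewrites the entries using $\tau_k=(\lambda_+^k-\lambda_-^k)/(\lambda_+-\lambda_-)$. Your Cayley--Hamilton induction bypasses diagonalizability entirely: the identity $W^n=\tau_n W-\tau_{n-1}I$ holds for any $W\in SL_2$ with $\operatorname{tr}W=\tau$, regardless of whether the eigenvalues coincide, so for this lemma alone your route is both shorter and more general. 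The paper's approach, on the other hand, makes the connection with the eigenvalues explicit, which is conceptually natural given how $\tau_k$ was introduced.

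One small slip: you write $W^{-1}=\tau W-W^2$, but by Cayley--Hamilton $\tau W-W^2=I$; the correct expression is $W^{-1}=\tau I-W$. This does not affect the argument, since your alternative ``running the recursion backwards'' is valid: from $W^{k+1}=\tau W^k-W^{k-1}$ one gets $W^{k-1}=\tau W^k-W^{k+1}$, and combining this with the inductive hypothesis and the recursion for $\tau_k$ gives the downward step.
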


\begin{proof}
When $n=\pm 1$, it is easy to see the conclusion.
Let $|n|>1$.
This easily follows from $W^n=P
\begin{pmatrix}
\lambda_+^n  & 0\\
0 & \lambda_-^n
\end{pmatrix}
P^{-1}$.
\end{proof}

\begin{proposition}\label{prop:riley}
The Riley polynomial of $K$ is
\[
\phi_K(s,t)=(\tau_{n+1}-\tau_n)+(s+2-t-1/t)f_{m-1}g_{m-1}\tau_n.
\]
\end{proposition}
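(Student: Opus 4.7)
The plan is to substitute the explicit forms for the entries of $W$ and $W^n$ given by Proposition \ref{prop:w-entry} and Lemma \ref{lem:wn} into the definition $\phi_K(s,t)=z_{1,1}+(1-t)z_{1,2}$, and then simplify using the polynomial identities of Lemma \ref{lem:fg} together with the recursion $\tau_{n+1}=\tau\tau_n-\tau_{n-1}$.

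First, Lemma \ref{lem:wn} gives $z_{1,1}=w_{1,1}\tau_n-\tau_{n-1}$ and $z_{1,2}=w_{1,2}\tau_n$, so factoring yields
\[
\phi_K(s,t)=\bigl(w_{1,1}+(1-t)w_{1,2}\bigr)\tau_n-\tau_{n-1}.
\]
Comparing with the claimed formula and using the recursion $\tau_{n+1}-\tau\tau_n+\tau_{n-1}=0$, the proposition is equivalent to the identity
\[
w_{1,1}+(1-t)w_{1,2}=(\tau-1)+(s+2-t-1/t)f_{m-1}g_{m-1}.
\]
In the proof of Lemma \ref{lem:eigenvalue} it was already shown that $\tau=\mathrm{tr}\,W=s(s+2-t-1/t)g_{m-1}^{2}+2$, so the right-hand side rearranges to $1+(s+2-t-1/t)g_{m-1}(sg_{m-1}+f_{m-1})$. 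Invoking Lemma \ref{lem:fg}(2) to replace $sg_{m-1}+f_{m-1}$ by $f_m$, the target identity becomes
\[
w_{1,1}+(1-t)w_{1,2}=1+(s+2-t-1/t)f_m g_{m-1}.
\]

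The remaining task is to verify this last identity by direct computation. Using Proposition \ref{prop:w-entry}, one expands
\[
w_{1,1}+(1-t)w_{1,2}=f_m^{2}-stg_{m-1}^{2}+f_{m-1}g_{m-1}+f_m g_{m-1}-tf_{m-1}g_{m-1}-f_m g_{m-1}/t,
\]
and then organizes terms by power of $t$. The coefficient of $1/t$ is $-f_m g_{m-1}$ on the nose. The coefficient of $t$ equals $-g_{m-1}(sg_{m-1}+f_{m-1})=-f_m g_{m-1}$ by Lemma \ref{lem:fg}(2). For the constant term, I replace $f_m^{2}$ by $sg_m g_{m-1}+1$ using Lemma \ref{lem:fg}(3), then use $g_m=f_m+g_{m-1}$ (Lemma \ref{lem:fg}(1)) to split $sg_m g_{m-1}=sf_m g_{m-1}+sg_{m-1}^{2}$, and finally apply Lemma \ref{lem:fg}(2) once more to combine $g_{m-1}(sg_{m-1}+f_{m-1})$ into $f_m g_{m-1}$; the result is $1+(s+2)f_m g_{m-1}$, as needed.

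There is no serious obstacle here; the proof is essentially a careful bookkeeping of a matrix product. The one place requiring attention is the constant-in-$t$ term, where three of the four identities in Lemma \ref{lem:fg} are combined in sequence; otherwise the recursion on the $\tau_k$ handles the rest mechanically.
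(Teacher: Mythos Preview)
Your proof is correct and follows essentially the same strategy as the paper: both substitute Lemma~\ref{lem:wn} and Proposition~\ref{prop:w-entry} into the definition of $\phi_K$, invoke the recursion $\tau_{n+1}-\tau\tau_n+\tau_{n-1}=0$ to produce the $\tau_{n+1}-\tau_n$ term, and then reduce the remaining coefficient of $\tau_n$ using the identities of Lemma~\ref{lem:fg}. The only difference is organizational: the paper first subtracts $\mathrm{tr}\,W$ to work with $w_{1,1}+(1-t)w_{1,2}+1-\mathrm{tr}\,W$, which collapses quickly (via Lemma~\ref{lem:fg}(3) at index $m-1$) to $(s+2-t-1/t)f_{m-1}g_{m-1}$, whereas you compute $w_{1,1}+(1-t)w_{1,2}$ directly, sort by powers of $t$, and land on the equivalent form $1+(s+2-t-1/t)f_m g_{m-1}$ (via Lemma~\ref{lem:fg}(3) at index $m$).
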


\begin{proof}
By \cite{Ri} (see also \cite[p.309]{DHY}),
the Riley polynomial is $\phi_K(s,t)=z_{1,1}+(1-t)z_{1,2}$,
where $z_{i,j}$ is the $(i,j)$-entry of $W^n$.
From Lemma \ref{lem:wn} and the recursive formula for $\tau_k$,
\begin{eqnarray*}
\phi_K(s,t)&=&(w_{1,1}\tau_n-\tau_{n-1})+(1-t)w_{1,2}\tau_n\\
&=&\tau_{n+1}-(\mathrm{tr}W)\tau_n+(w_{1,1}+(1-t)w_{1,2})\tau_n\\
&=&(\tau_{n+1}-\tau_n)+(w_{1,1}+(1-t)w_{1,2}+1-\mathrm{tr}W)\tau_n.
\end{eqnarray*}
By Proposition \ref{prop:w-entry}, $\mathrm{tr}W=f_m^2+f_{m-1}^2-s(t+1/t)g_{m-1}^2$.
Thus we have
\[
w_{1,1}+(1-t)w_{1,2}+1-\mathrm{tr}W = (2-t-1/t)f_{m-1}g_{m-1}+1-f_{m-1}^2+sg_{m-1}^2.
\]
Since
\begin{eqnarray*}
1-f_{m-1}^2+sg_{m-1}^2 &=& -sg_{m-1}g_{m-2}+sg_{m-1}^2\\
&=& sg_{m-1}(g_{m-1}-g_{m-2})\\
&=& sg_{m-1}f_{m-1},
\end{eqnarray*}
we obtain
$\phi_K(s,t)=(\tau_{n+1}-\tau_n)+(s+2-t-1/t)f_{m-1}g_{m-1}\tau_n$.
\end{proof}

For convenience, we introduce a variable $T=t+1/t$.
Then the Riley polynomial of $K$ is
expressed as
$\phi_K(s,T)=(\tau_{n+1}-\tau_n)+(s+2-T)f_{m-1}g_{m-1}\tau_n$.

We remark that the Riley polynomial of $K$ is also described by \cite{MT}
in a different form.

\begin{example}\label{ex:riley}
If $n=1$, then
\begin{eqnarray*}
\phi_K(s,T)&=&(\tau_2-\tau_1)+(s+2-T)f_{m-1}g_{m-1}\tau_1\\
&=& (\mathrm{tr}W-1)+(s+2-T)f_{m-1}g_{m-1}\\
&=& s(s+2-T)g_{m-1}^2+1+(s+2-T)f_{m-1}g_{m-1}\\
&=& (s+2-T)g_{m-1}(sg_{m-1}+f_{m-1})+1\\
&=& (s+2-T)g_{m-1}f_m+1.
\end{eqnarray*}
Thus Riley's equation $\phi_K(s,T)=0$ has
the unique solution $T=s+2+1/(f_{m}g_{m-1})$ for any $s>0$.
If $m>0$, then $T>s+2>2$, because $f_m>0$ and $g_{m-1}>0$.
Hence we have a real solution $t=(T+\sqrt{T^2-4})/2>1$.
By Lemma \ref{lem:fg-ineq}(1),
we also have $s+2<T<s+2+4/(sg_{m-1}^2)$.

If $n=-1$, then $\phi_K(s,T)=1+(T-s-2)f_{m-1}g_{m-1}$.
Hence the equation $\phi_K(s,T)=0$ also has the unique solution
$T=s+2-1/(f_{m-1}g_{m-1})$.
If $m<0$, then $T>s+2>2$, so there exists a real solution $t>1$ as above.
Again, $s+2<T<s+2+4/(sg_{m-1}^2)$ by Lemma \ref{lem:fg-ineq}(2).
\end{example}

\section{Solutions of Riley's equation}

In this section, we examine when
Riley's equation $\phi_K(s,T)=0$ has a pair of real solutions $(s,T)$.
In fact, we can choose $T$ satisfying
$s+2+c/s<T<s+2+d/s$, where $c$ and $d$ are constants depending only $n$, for any $s>0$, unless $n=\pm 1$.

Let $k$ be a positive integer.
For $z=e^{i\theta}$ $(0\le\theta\le\pi)$,
set $\mathcal{T}_k(z)=z^{k-1}+z^{k-3}+\dots+z^{3-k}+z^{1-k}$.
If $z\ne \pm 1$, then $\mathcal{T}_k(z)=(z^k-z^{-k})/(z-z^{-1})$.
Define $\mathcal{T}_0=0$ and $\mathcal{T}_{-k}(z)=-\mathcal{T}_k(z)$.
Since $\mathcal{T}_k(z)$ is symmetric for $z$ and $z^{-1}$, it can be expanded as a polynomial
of $z+z^{-1}$.
Furthermore, a recursive relation
\[
\mathcal{T}_{k+1}(z)-(z+z^{-1})\mathcal{T}_k(z)+\mathcal{T}_{k-1}(z)=0
\]
holds.
Also, $\mathcal{T}_k(1)=k$ 
and $\mathcal{T}_k(-1)=(-1)^{k-1}k$
for any integer $k$.

\begin{lemma}\label{lem:tau}
\begin{itemize}
\item[(1)] Let $k\ge 1$. Then,
$\mathcal{T}_{k}(e^{\frac{\pi}{2k+1}i})=\mathcal{T}_{k+1}(e^{\frac{\pi}{2k+1}i})$.
This value is positive.
\item[(2)] Let $m\ge 2$. Then,
$\mathcal{T}_{k}(e^{\frac{3\pi}{2k+1}i})=\mathcal{T}_{k+1}(e^{\frac{3\pi}{2k+1}i})$.
This value is negative.
\end{itemize}
\end{lemma}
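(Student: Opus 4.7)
The plan is to exploit the fact that for $z = e^{i\theta}$ with $\theta \notin \{0,\pi\}$, the symmetric sum $\mathcal{T}_k(z) = z^{k-1}+z^{k-3}+\dots+z^{1-k}$ telescopes into the Dirichlet-kernel-style expression
\[
\mathcal{T}_k(e^{i\theta}) \;=\; \frac{z^k - z^{-k}}{z - z^{-1}} \;=\; \frac{\sin(k\theta)}{\sin\theta}.
\]
Once this closed form is in hand, both claimed equalities reduce to the elementary observation that $\sin(k\theta) = \sin((k+1)\theta)$ whenever $k\theta + (k+1)\theta$ is a multiple of $\pi$ of the right parity, and the sign assertions reduce to locating the angle $k\theta$ on the unit circle.

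For part (1) I would set $\theta = \pi/(2k+1)$, so that $k\theta + (k+1)\theta = (2k+1)\theta = \pi$. Then $(k+1)\theta = \pi - k\theta$, hence $\sin((k+1)\theta) = \sin(k\theta)$, yielding $\mathcal{T}_{k+1}(e^{i\theta}) = \mathcal{T}_k(e^{i\theta})$ directly from the closed form. For the sign, note that for $k \geq 1$ we have $\theta \in (0,\pi/3]$ and $k\theta = k\pi/(2k+1) \in [\pi/3,\pi/2)$, so both $\sin\theta$ and $\sin(k\theta)$ are strictly positive, and the common value is positive.

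For part (2) (reading ``$m\ge 2$'' as $k\ge 2$), I would set $\theta = 3\pi/(2k+1)$, so that $k\theta + (k+1)\theta = 3\pi$. Using $\sin(3\pi - x) = \sin(\pi - x) = \sin(x)$, one has $\sin((k+1)\theta) = \sin(3\pi - k\theta) = \sin(k\theta)$, and the equality of $\mathcal{T}_k$ and $\mathcal{T}_{k+1}$ at this point follows exactly as above. For the sign, I would check that for $k \geq 2$ the angle $\theta = 3\pi/(2k+1)$ lies in $(0,3\pi/5]$, hence $\sin\theta > 0$, while $k\theta = 3k\pi/(2k+1)$ satisfies $\pi < k\theta < 3\pi/2$ (the lower bound from $3k > 2k+1$ for $k \geq 2$, the upper bound from $3k < 3k + \tfrac{3}{2}$), so $\sin(k\theta) < 0$ and the ratio is negative.

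There is no real obstacle here — the whole argument is a trigonometric identity once the closed form for $\mathcal{T}_k$ is recognised. The only point requiring a little care is the sign analysis in part (2), where one must verify that $k\theta$ genuinely lands in the third quadrant of the unit circle for all $k \geq 2$; this is a direct inequality check on $3k/(2k+1)$.
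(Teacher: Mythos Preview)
Your proof is correct and follows essentially the same route as the paper: both arguments reduce to the closed form $\mathcal{T}_k(e^{i\theta}) = \sin(k\theta)/\sin\theta$, use $(2k+1)\theta \in \{\pi,3\pi\}$ to obtain $\sin((k+1)\theta)=\sin(k\theta)$ (the paper phrases this as $z^{2k+1}=-1$), and then check the sign by locating $k\theta$ on $(0,\pi)$ or $(\pi,3\pi/2)$. Your explicit inequality check $1 < 3k/(2k+1) < 3/2$ for $k\ge 2$ is exactly what justifies the paper's bare assertion that the ratio is negative in part (2).
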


\begin{proof}
(1) Let $z=e^{\frac{\pi}{2k+1}i}$.
Then the fact that $z^{2k+1}=-1$ immediately implies
$\mathcal{T}_k(z)=\mathcal{T}_{k+1}(z)$.
A direct calculation shows
\[
\mathcal{T}_k(z)=\frac{\sin \frac{k\pi}{2k+1}}{\sin\frac{\pi}{2k+1}}>0.
\]

(2) Similarly, set $z=e^{\frac{3\pi}{2k+1}i}$.
Then $z^{2k+1}=-1$ holds again.
Hence we have $\mathcal{T}_k(z)=\mathcal{T}_{k+1}(z)$,
and
\[
\mathcal{T}_k(z)=\frac{\sin \frac{3k\pi}{2k+1}}{\sin\frac{3\pi}{2k+1}}<0.
\]
\end{proof}

Now, fix an $s>0$. 
We introduce a function $\Phi: [s+2,s+2+4/(sg_{m-1}^2)]\to\mathbb{R}$
by
\begin{equation}\label{eq:riley-P}
\Phi(T)=(\mathcal{T}_{n+1}(z)-\mathcal{T}_n(z))+(s+2-T)f_{m-1}g_{m-1}\mathcal{T}_n(z),
\end{equation}
where
$z=(\tau+i\sqrt{4-\tau^2})/2$ with 
$\tau=s(2+s-T)g_{m-1}^2+2$.

Note that $-2\le \tau\le 2$.
We will seek a solution $T$ for $\Phi(T)=0$ satisfying
$s+2<T<s+2+4/(sg_{m-1}^2)$, because
it gives a pair of solutions $(s,T)$ for Riley's equation $\phi_K(s,T)=0$.

\begin{proposition}\label{prop:root}
Suppose $n\ne \pm 1$.
For any $s>0$, Riley's equation $\phi_K(s,T)=0$ has a solution $T$ satisfying
$s+2+c/(sg_{m-1}^2)<T<s+2+d/(sg_{m-1}^2)$, where $c$ and $d$ are constants in $(0,4)$ depending only on $n$.
In particular, $\phi_K(s,t)=0$ has a solution $t>1$ for any $s>0$. 
\end{proposition}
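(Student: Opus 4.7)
The plan is to apply the Intermediate Value Theorem to the continuous function $\Phi$ defined in (\ref{eq:riley-P}), showing that it changes sign between two explicit points of the form $s+2+\mathrm{const}/(sg_{m-1}^2)$ with constants in $(0,4)$ depending only on $n$.

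First I parametrize by $\theta\in[0,\pi]$ via $z=e^{i\theta}$, $\tau=2\cos\theta$, and $T(\theta)=s+2+2(1-\cos\theta)/(sg_{m-1}^2)$, so that $T$ sweeps the whole interval monotonically and
\[
\Phi(T(\theta))=\bigl(\mathcal{T}_{n+1}(z)-\mathcal{T}_n(z)\bigr)-\frac{2(1-\cos\theta)\,f_{m-1}}{sg_{m-1}}\,\mathcal{T}_n(z).
\]
The strategy is to pick $\theta_1<\theta_2$ in $(0,\pi)$ at which Lemma \ref{lem:tau} forces the first summand to vanish, so that the sign of $\Phi(T(\theta_i))$ equals the sign of $-(f_{m-1}/g_{m-1})\mathcal{T}_n(z_i)$.

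For $n\ge 2$ I take $\theta_1=\pi/(2n+1)$ and $\theta_2=3\pi/(2n+1)$, both in $(0,\pi)$. At $z_j=e^{i\theta_j}$ Lemma \ref{lem:tau} gives $\mathcal{T}_{n+1}(z_j)=\mathcal{T}_n(z_j)$, with $\mathcal{T}_n(z_1)>0$ by (1) and $\mathcal{T}_n(z_2)<0$ by (2) (available since $n\ge 2$). Because $f_{m-1}/g_{m-1}$ has a single sign (positive if $m>0$, negative if $m<0$), $\Phi(T(\theta_1))$ and $\Phi(T(\theta_2))$ are of opposite signs whatever the sign of $m$, and the Intermediate Value Theorem yields a root in
\[
(T(\theta_1),T(\theta_2))=\bigl(s+2+\tfrac{c}{sg_{m-1}^2},\;s+2+\tfrac{d}{sg_{m-1}^2}\bigr)
\]
with $c=2(1-\cos(\pi/(2n+1)))$ and $d=2(1-\cos(3\pi/(2n+1)))$, both in $(0,4)$ and depending only on $n$. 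The case $n\le-3$ is symmetric: one uses $\mathcal{T}_{n+1}(z)-\mathcal{T}_n(z)=\mathcal{T}_{|n|}(z)-\mathcal{T}_{|n|-1}(z)$ and applies Lemma \ref{lem:tau} at $k=|n|-1\ge 2$, obtaining the angles $\pi/(2|n|-1)$ and $3\pi/(2|n|-1)$ and the analogous constants.

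The main obstacle is the remaining case $n=-2$, for which Lemma \ref{lem:tau}(2) is unavailable. Here $\mathcal{T}_{n+1}(z)-\mathcal{T}_n(z)=\tau-1$ has only $\tau=1$ as a zero in $[-2,2]$, so one clean angle is missing and a direct computation is needed. Setting $A=f_{m-1}/(sg_{m-1})$, a short computation reduces $\Phi$ to $(\tau-1)+\tau(2-\tau)A$. At $\tau=1$ this equals $A$; at $\tau=-1$ it equals $-2-3A$. For $m>0$ one has $A>0$, so the two values have opposite signs. For $m<0$, the identity $sg_{m-1}=f_m-f_{m-1}$ of Lemma \ref{lem:fg}(2), together with the positivity of the $f_j$ from Lemma \ref{lem:fg-formula}, shows $|A|>1$, so $A<-1$ and $-2-3A>1$; again opposite signs. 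The Intermediate Value Theorem therefore produces a root in $(s+2+1/(sg_{m-1}^2),\,s+2+3/(sg_{m-1}^2))$, giving $c=1$ and $d=3$. Finally, any such root $T^\star$ satisfies $T^\star>2$, so the equation $t+1/t=T^\star$ admits the real solution $t=(T^\star+\sqrt{(T^\star)^2-4})/2>1$, yielding the last sentence of the proposition.
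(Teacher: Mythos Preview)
Your proof is correct and follows essentially the same approach as the paper's: use the parametrization $\tau=2\cos\theta$ to pick angles at which $\mathcal{T}_{n+1}=\mathcal{T}_n$ via Lemma~\ref{lem:tau}, observe the sign of $\Phi$ is then governed by $-(f_{m-1}/g_{m-1})\mathcal{T}_n$, and apply the Intermediate Value Theorem; the case $n=-2$ requires a separate direct computation.

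The only noticeable difference is in that special case $n=-2$. The paper evaluates $\Phi$ at the three points $\tau=1,0,-1$ and uses $\Phi(\tau=0)=-1$ as the pivot, pairing it with $\Phi(\tau=1)=A>0$ when $m>0$ (so $c=1$, $d=2$) and with $\Phi(\tau=-1)=-2-3A>0$ when $m<0$ via Lemma~\ref{lem:fg-ineq}(3) (so $c=2$, $d=3$). You instead use only the two outer points $\tau=\pm 1$ and, for $m<0$, replace the appeal to Lemma~\ref{lem:fg-ineq}(3) by the sharper and more elementary observation $|A|>1\Longleftrightarrow f_m>0$. This has the small advantage that your constants $c=1$, $d=3$ are genuinely independent of the sign of $m$, matching the wording of the proposition more literally; the paper's constants in this case depend on $\operatorname{sgn}(m)$, though of course one can always enlarge to $c=1$, $d=3$ afterward.
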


\begin{proof}
Suppose $n>1$.
By Lemma \ref{lem:tau},
\[
\mathcal{T}_{n+1}(e^{\frac{\pi}{2n+1}i})=\mathcal{T}_n(e^{\frac{\pi}{2n+1}i}),\quad
\mathcal{T}_{n+1}(e^{\frac{3\pi}{2n+1}i})=\mathcal{T}_n(e^{\frac{3\pi}{2n+1}i}).
\]
Let $c=2-2\cos\frac{\pi}{2n+1}$ and $c'=2-2\cos\frac{3\pi}{2n+1}$.
Then $c,c'\in (0,4)$ and $c<c'$.

If $T=s+2+c/(sg_{m-1}^2)$ (resp. $s+2+c'/(sg_{m-1}^2$) then $\tau=2-c$ (resp. $2-c'$).
Thus
\begin{eqnarray*}
\Phi(s+2+\frac{c}{sg_{m-1}^2})&=&-\frac{cf_{m-1}}{sg_{m-1}}\mathcal{T}_n(e^{\frac{\pi}{2n+1}i}),\\
\Phi(s+2+\frac{c'}{sg_{m-1}^2})&=&-\frac{c'f_{m-1}}{sg_{m-1}}\mathcal{T}_n(e^{\frac{3\pi}{2n+1}i}).
\end{eqnarray*}
By Lemma \ref{lem:tau}, these values have distinct signs.
We remark that $\Phi(T)$ is a polynomial function of $T$, so it is continuous.
Thus
if $n>1$, we have a solution $T$ for $\Phi(T)=0$, satisfying $s+2+c/(sg_{m-1}^2)<T<s+2+c'/(sg_{m-1}^2)$,
from the Intermediate-Value Theorem.
Since $T>2$, $t+1/t=T$ has a real solution for $t$.
If we choose $t=(T+\sqrt{T^2-4})/2$, then $t>1$.

Suppose $n<-1$.   Set $l=|n|$.
If $l>2$, then 
set $d=2-2\cos\frac{\pi}{2l-1}$
and $d'=2-2\cos\frac{3\pi}{2l-1}$.
Then $d,d'\in (0,4)$ and $d<d'$.
As before,
\[
\mathcal{T}_{l-1}(e^{\frac{\pi}{2l-1}i})=\mathcal{T}_l(e^{\frac{\pi}{2l-1}i}),\quad
\mathcal{T}_{l-1}(e^{\frac{3\pi}{2l-1}i})=\mathcal{T}_l(e^{\frac{3\pi}{2l-1}i}).
\]
by Lemma \ref{lem:tau}.
Thus
\begin{eqnarray*}
\Phi(s+2+\frac{d}{sg_{m-1}^2})
&=& \frac{d\,f_{m-1}}{sg_{m-1}^2}\mathcal{T}_l(e^{\frac{\pi}{2l-1}i}),\\
\Phi(s+2+\frac{d'}{sg_{m-1}^2})
&=& \frac{d'f_{m-1}}{sg_{m-1}^2}\mathcal{T}_l(e^{\frac{3\pi}{2l-1}i}).
\end{eqnarray*}
Since these values have distinct signs,
we have a solution $T$ with $s+2+d/(sg_{m-1}^2)<T<s+2+d'/(sg_{m-1}^2)$, if $l>2$, as before.

When $l=2$, 
we have
\[
\Phi(s+2+\frac{1}{sg_{m-1}^2})=\frac{f_{m-1}}{sg_{m-1}},\quad  \Phi(s+2+\frac{2}{sg_{m-1}^2})=-1,
\]
\[
\Phi(s+2+\frac{3}{sg_{m-1}^2})=-2-\frac{3f_{m-1}}{sg_{m-1}}.
\]
If $m>0$, then $\Phi(s+2+1/(sg_{m-1}^2))>0$, since $f_{m-1},g_{m-1}>0$.
Therefore there exists a solution $T$ with $s+2+1/(sg_{m-1}^2)<T<s+2+2/(sg_{m-1}^2)$.
If $m<0$, $\Phi(s+2+3/(sg_{m-1}^2))>0$, since $3f_{m-1}>-2sg_{m-1}$ by Lemma \ref{lem:fg-ineq}(3).
Thus there exists a solution $T$ with $s+2+2/(sg_{m-1}^2)<T<s+2+3/(sg_{m-1}^2)$
\end{proof}

\section{Longitudes}\label{sec:longi}

For $s>0$,
let
$\rho_s: G\to SL_2(\mathbb{R})$ be the representation defined by the correspondence
\begin{equation}\label{eq:rho}
\rho_s(x)=
\begin{pmatrix}
\sqrt{t} & 0 \\
0 & \frac{1}{\sqrt{t}}
\end{pmatrix},\quad
\rho_s(y)=
\begin{pmatrix}
\frac{t-s-1}{\sqrt{t}-\frac{1}{\sqrt{t}}} & \frac{s}{(\sqrt{t}-\frac{1}{\sqrt{t}})^2}-1 \\
-s & \frac{s+1-\frac{1}{t}}{\sqrt{t}-\frac{1}{\sqrt{t}}}
\end{pmatrix}.
\end{equation}

For $Q=\begin{pmatrix}
t-1 & 1\\
0 & \sqrt{t}-\frac{1}{\sqrt{t}}
\end{pmatrix}$,
\[
Q^{-1} \rho_s(x) Q=\begin{pmatrix} 
\sqrt{t} & \frac{1}{\sqrt{t}} \\
0 & \frac{1}{\sqrt{t}}
\end{pmatrix},\quad 
Q^{-1} \rho_s(y) Q=\begin{pmatrix}
\sqrt{t} & 0\\
-s\sqrt{t} & \frac{1}{\sqrt{t}}
\end{pmatrix}.
\]
Therefore, $\rho_s$ is conjugate with $\rho$ defined in Section \ref{sec:riley}.
This implies that if $s$ and $t$ satisfy Riley's equation $\phi_K(s,t)=0$ 
then $\rho_s$ gives a representation of $G$ as well as $\rho$.
In this section, we examine the image of the longitude $\mathcal{L}$ of $G$
under $\rho_s$.

Throughout the section,
let $U=\rho_s(w)$ and $u_{i,j}$ be its $(i,j)$-entry, and
let $v_{i,j}$ be the entries of $U^n$. 
Also, set $\sigma=\frac{s(\sqrt{t}-\frac{1}{\sqrt{t}})^2}{(\sqrt{t}-\frac{1}{\sqrt{t}})^2-s}$.

\begin{lemma}\label{lem:ww*}
For $w_*^n$, we have
$\rho_s(w_*^n)=\begin{pmatrix}
v_{1,1} & \frac{v_{2,1}}{\sigma}\\
v_{1,2}\sigma & v_{2,2}
\end{pmatrix}$.
\end{lemma}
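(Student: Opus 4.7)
The plan is to exploit the fact that $w_*$ is $w$ with the order of all letters reversed, together with the identity $(A_1\cdots A_k)^T=A_k^T\cdots A_1^T$. Concretely, I would look for an invertible $J$ satisfying $J\rho_s(x)J^{-1}=\rho_s(x)^T$ and $J\rho_s(y)J^{-1}=\rho_s(y)^T$; the same intertwining then holds automatically for $x^{-1}$ and $y^{-1}$ by taking inverses of both sides, and hence for any word $a_1a_2\cdots a_k$ in the generators,
\[
\rho_s(a_k\cdots a_1)=J^{-1}\bigl(\rho_s(a_1\cdots a_k)\bigr)^T J.
\]
Applied with $w=(xy^{-1})^m(x^{-1}y)^m$ and $w_*=(yx^{-1})^m(y^{-1}x)^m$, this gives $\rho_s(w_*)=J^{-1}U^T J$, and then raising to the $n$-th power (which commutes with both transposition and conjugation) yields $\rho_s(w_*^n)=J^{-1}(U^n)^T J$.

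To produce $J$, I would try a diagonal ansatz $J=\mathrm{diag}(a,b)$. Since $\rho_s(x)$ is diagonal and hence equal to its own transpose, the condition at $x$ is automatic for any diagonal $J$. Conjugating $\rho_s(y)$ by $J$ fixes the diagonal entries and scales the off-diagonal entries by $a/b$ and $b/a$, so producing $\rho_s(y)^T$ (which simply swaps the two off-diagonal entries) amounts to choosing $a/b=\rho_s(y)_{2,1}/\rho_s(y)_{1,2}$. A direct algebraic simplification using (\ref{eq:rho}),
\[
\frac{\rho_s(y)_{2,1}}{\rho_s(y)_{1,2}}=\frac{-s}{\,s/(\sqrt t-1/\sqrt t)^2-1\,}=\frac{s(\sqrt t-1/\sqrt t)^2}{(\sqrt t-1/\sqrt t)^2-s}=\sigma,
\]
identifies this ratio as precisely the quantity $\sigma$ defined in the section, so $J=\mathrm{diag}(\sigma,1)$ works.

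Finally, conjugating the transposed matrix by $J$,
\[
\mathrm{diag}(\sigma^{-1},1)\begin{pmatrix}v_{1,1}&v_{2,1}\\ v_{1,2}&v_{2,2}\end{pmatrix}\mathrm{diag}(\sigma,1)=\begin{pmatrix}v_{1,1}&v_{2,1}/\sigma\\ \sigma v_{1,2}&v_{2,2}\end{pmatrix},
\]
reproduces the claimed formula. The only point requiring any actual computation is the identification $\rho_s(y)_{2,1}/\rho_s(y)_{1,2}=\sigma$, which is precisely where the somewhat mysterious looking constant $\sigma$ enters and shows that it is exactly the right quantity; everything else is a formal consequence of the reversal-transpose duality for matrix words.
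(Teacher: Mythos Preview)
Your proof is correct, and it is essentially a conceptual repackaging of the paper's argument. The paper computes the four building-block matrices $\rho_s(xy^{-1})$, $\rho_s(y^{-1}x)$, $\rho_s(x^{-1}y)$, $\rho_s(yx^{-1})$ explicitly, observes that each matrix $\begin{pmatrix}a&b\\c&d\end{pmatrix}$ is paired with its ``reversed'' partner $\begin{pmatrix}a&c/\sigma\\b\sigma&d\end{pmatrix}$, and then verifies by a direct two-by-two calculation that this pairing is preserved under matrix multiplication. Your observation that this pairing is nothing other than $A\mapsto J^{-1}A^TJ$ with $J=\mathrm{diag}(\sigma,1)$ explains \emph{why} the relation is multiplicative (transpose reverses products, conjugation respects them) and lets you check the relation once at the level of the generators $x,y$ rather than at the level of the four two-letter words. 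The two arguments are logically equivalent, but yours is slightly more transparent and avoids the explicit computation of $\rho_s(xy^{-1})$ etc.
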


\begin{proof}
By a direct calculation,
\[
\rho_s(xy^{-1})=\begin{pmatrix}
\frac{t-1+st}{t-1} & \frac{s\sqrt{t}}{\sigma} \\
\frac{s}{\sqrt{t}} & \frac{t-1-s}{t-1}
\end{pmatrix}, \quad
\rho_s(y^{-1}x)=\begin{pmatrix}
\frac{t-1+st}{t-1} & \frac{s}{\sqrt{t}\sigma} \\
s\sqrt{t} & \frac{t-1-s}{t-1}
\end{pmatrix},
\]
\[
\rho_s(x^{-1}y)=\begin{pmatrix}
\frac{t-1-s}{t-1} & -\frac{s}{\sqrt{t}\sigma} \\
-s\sqrt{t} & \frac{t-1+st}{t-1}
\end{pmatrix}, \quad
\rho_s(yx^{-1})=\begin{pmatrix}
\frac{t-1-s}{t-1} & -\frac{s\sqrt{t}}{\sigma} \\
-\frac{s}{\sqrt{t}} & \frac{t-1+st}{t-1}
\end{pmatrix}.
\]
Thus we see that the $(1,2)$-entry of $\rho_s(y^{-1}x)$ is
the $(2,1)$-entry of $\rho_s(xy^{-1})$ divided by $\sigma$,
the $(2,1)$-entry of $\rho_s(y^{-1}x)$ is
the $(1,2)$-entry of $\rho_s(xy^{-1})$ multiplied by $\sigma$,
and the others of $\rho_s(y^{-1}x)$ coincide with those of $\rho_s(xy^{-1})$.
The same relation between entries holds for $\rho_s(x^{-1}y)$ and $\rho_s(yx^{-1})$.

In general, such a relation is preserved under the matrix multiplication;
\[
\begin{pmatrix}
a & b \\
c & d
\end{pmatrix}
\begin{pmatrix}
p & q \\
r & s
\end{pmatrix}
=
\begin{pmatrix}
ap+br & aq+bs \\
cp+dr & cq+ds
\end{pmatrix},
\]
\[
\begin{pmatrix}
p & \frac{r}{\sigma} \\
q\sigma & s
\end{pmatrix}
\begin{pmatrix}
a & \frac{c}{\sigma} \\
b\sigma & d
\end{pmatrix}
=
\begin{pmatrix}
ap+br & \frac{cp+dr}{\sigma} \\
(aq+bs)\sigma & cq+ds
\end{pmatrix}.
\]
Thus we can confirm that the same relation holds for $\rho_s(w^n)$ and $\rho_s(w_*^n)$.
\end{proof}

\begin{proposition}\label{prop:longi}
For the longitude $\mathcal{L}$ of $G$,
the matrix $\rho_s(\mathcal{L})$ is diagonal, and
the $(1,1)$-entry of $\rho_s(\mathcal{L})$ is a positive real number.
\end{proposition}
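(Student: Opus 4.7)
My plan is to split Proposition \ref{prop:longi} into its two assertions—diagonality of $\rho_s(\mathcal{L})$ and positivity of its $(1,1)$-entry—and dispatch each with the tools already at hand.

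For diagonality I will invoke the classical fact that in a knot group the meridian and longitude commute, since they jointly generate $\pi_1(\partial N(K))$. Here $x = \mu_1$ is the meridian of the first surgery component in Figure \ref{fig:pi1} (which becomes $K$ after the surgery), and $\mathcal{L}$ was constructed on that same peripheral torus in Section \ref{sec:knotgroup}, so $x\mathcal{L} = \mathcal{L}x$ in $G$. Consequently $\rho_s(\mathcal{L})$ commutes with $\rho_s(x) = \mathrm{diag}(\sqrt{t}, 1/\sqrt{t})$, whose two eigenvalues are distinct because $t > 1$. The centralizer in $GL_2(\mathbb{R})$ of a diagonal matrix with distinct eigenvalues consists entirely of diagonal matrices, so $\rho_s(\mathcal{L})$ is forced to be diagonal.

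For positivity, I use $\mathcal{L} = w_*^n w^n$ together with Lemma \ref{lem:ww*} and perform the two-by-two multiplication; the $(1,1)$-entry simplifies to $v_{1,1}^2 + v_{2,1}^2/\sigma$. Rewriting $\sigma = s(T-2)/(T-2-s)$ with $T = t + 1/t$, the standing inequality $s + 2 < T$ from Section \ref{sec:riley} (verified in Example \ref{ex:riley} for $n = \pm 1$ and in Proposition \ref{prop:root} otherwise) makes both numerator and denominator positive, so $\sigma > 0$. Since $U^n \in SL_2(\mathbb{R})$ is invertible, its first column $(v_{1,1}, v_{2,1})^{T}$ is nonzero, and both $v_{1,1}^2$ and $v_{2,1}^2/\sigma$ are nonnegative, yielding $v_{1,1}^2 + v_{2,1}^2/\sigma > 0$.

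The only conceptual input is the peripheral commutation $x\mathcal{L} = \mathcal{L}x$, which I am importing topologically rather than deriving from the presentation; one could alternatively confirm it algebraically by using the order-reversing anti-automorphism of $F_2 = \langle x, y\rangle$ to turn the defining relation $w^n x = y w^n$ into a companion relation $x w_*^n = w_*^n y$, after which $x\mathcal{L} = x w_*^n w^n = w_*^n y w^n = w_*^n w^n x = \mathcal{L} x$. Once commutativity is accepted, the remaining work is routine matrix arithmetic and sign-tracking, so I do not anticipate a substantive obstacle.
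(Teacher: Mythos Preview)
Your proof is correct and follows essentially the same route as the paper: diagonality via commutation of $x$ and $\mathcal{L}$ together with $\rho_s(x)$ having distinct diagonal entries, and positivity via the explicit product formula from Lemma~\ref{lem:ww*} combined with $\sigma>0$ (which reduces to $T>s+2$). Your extra remark that the first column of $U^n$ is nonzero is a welcome clarification the paper leaves implicit, and your optional algebraic derivation of $x\mathcal{L}=\mathcal{L}x$ from the reversed relation $xw_*^n=w_*^n y$ is a nice bonus.
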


\begin{proof}
The first assertion follows from the facts that
for a meridian $x$, 
$\rho_s(x)$ is diagonal but $\rho_s(x)\ne \pm I$ and that $x$ and $\mathcal{L}$ commute.

Since $\mathcal{L}=w_*^n w^n$, Lemma \ref{lem:ww*} implies that
\begin{eqnarray*}
\rho_s(\mathcal{L})=\rho_s(w_*^n)\rho_s(w^n)&=&
\begin{pmatrix}
v_{1,1} & \frac{v_{2,1}}{\sigma} \\
v_{1,2}\sigma & v_{2,2}
\end{pmatrix}
\begin{pmatrix}
v_{1,1} & v_{1,2} \\
v_{2,1} & v_{2,2}
\end{pmatrix}
\\
&=&
\begin{pmatrix}
v_{1,1}^2+\frac{v_{2,1}^2}{\sigma} & v_{1,1}v_{1,2}+\frac{v_{2,1}v_{2,2}}{\sigma}\\ 
v_{1,1}v_{1,2}\sigma+v_{2,1}v_{2,2} & v_{1,2}^2\sigma+v_{2,2}^2
\end{pmatrix}.
\end{eqnarray*}

Hence the $(1,1)$-entry is $v_{1,1}^2+v_{2,1}^2/\sigma$, which
is positive, because
$s>0$ and $(\sqrt{t}-1/\sqrt{t})^2-s=T-s-2>0$ from Proposition \ref{prop:root}.
\end{proof}

\begin{remark}\label{rem:longi}
Since $\rho_s(\mathcal{L})$ is diagonal,
we also obtain an equation $v_{1,1}v_{1,2}\sigma+v_{2,1}v_{2,2}=0$.
This will be used in the proof of Lemma \ref{lem:bs}.
\end{remark}


For $W=\rho(w)$, recall that $w_{i,j}$ is its entry.

\begin{lemma}\label{lem:u-entry}
For $U=\rho_s(w)$,
\begin{eqnarray*}
u_{1,1}&=& w_{1,1}+\frac{w_{2,1}}{t-1}, \quad u_{1,2}=\sqrt{t}\left(w_{1,2}-\frac{w_{1,1}}{t-1}\right)+\frac{\sqrt{t}}{t-1}\left(w_{2,2}-\frac{w_{2,1}}{t-1}\right),\\
u_{2,1}&=& \frac{w_{2,1}}{\sqrt{t}}, \quad u_{2,2}=w_{2,2}-\frac{w_{2,1}}{t-1}.
\end{eqnarray*}
\end{lemma}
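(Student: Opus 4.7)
The plan is to exploit the conjugacy between $\rho_s$ and $\rho$ that is exhibited by the matrix $Q$ at the beginning of Section \ref{sec:longi}. From the formulas
\[
Q^{-1}\rho_s(x)Q=\rho(x),\qquad Q^{-1}\rho_s(y)Q=\rho(y),
\]
we know that $\rho_s(g)=Q\rho(g)Q^{-1}$ for every $g\in G$, since conjugation is a group automorphism of $SL_2(\mathbb{R})$ and $G$ is generated by $x$ and $y$. Applied to the specific word $w=(xy^{-1})^m(x^{-1}y)^m$, this gives the identity $U=QWQ^{-1}$, which reduces the lemma to a purely mechanical $2\times 2$ matrix computation.

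First I would record $Q^{-1}$ explicitly. Using $\sqrt{t}-1/\sqrt{t}=(t-1)/\sqrt{t}$, one has $\det Q=(t-1)^2/\sqrt{t}$, and hence
\[
Q^{-1}=\begin{pmatrix}\dfrac{1}{t-1} & -\dfrac{\sqrt{t}}{(t-1)^2}\\[2pt] 0 & \dfrac{\sqrt{t}}{t-1}\end{pmatrix}.
\]
Then I would multiply out $QW$ and follow with the right multiplication by $Q^{-1}$, reading off the four entries of $U$ one at a time. The entries $u_{2,1}$, $u_{2,2}$, $u_{1,1}$ fall out almost immediately, since $Q$ is upper triangular: for instance $u_{2,1}$ arises only from the $(2,1)$-entry of $QW$ divided by $t-1$, yielding $w_{2,1}/\sqrt{t}$, and $u_{1,1}$ is $[(t-1)w_{1,1}+w_{2,1}]/(t-1)=w_{1,1}+w_{2,1}/(t-1)$, matching the claim.

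The only entry that requires any grouping is $u_{1,2}$, where after the two multiplications one gets
\[
u_{1,2}=-\frac{\sqrt{t}\,[(t-1)w_{1,1}+w_{2,1}]}{(t-1)^2}+\frac{\sqrt{t}\,[(t-1)w_{1,2}+w_{2,2}]}{t-1},
\]
and rearranging the terms exactly produces $\sqrt{t}\bigl(w_{1,2}-w_{1,1}/(t-1)\bigr)+\frac{\sqrt{t}}{t-1}\bigl(w_{2,2}-w_{2,1}/(t-1)\bigr)$. There is no essential obstacle here, only bookkeeping; the single subtlety is to remember that the conjugation relation propagates from the generators to every word in the free group, which is why it suffices that the identity is verified on $x$ and $y$ alone.
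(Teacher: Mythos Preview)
Your proof is correct and follows exactly the same approach as the paper, which simply states that the formulas follow immediately from computing the product $U=QWQ^{-1}$. You have merely written out the details of that computation explicitly, and they are all in order.
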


\begin{proof}
This immediately follows by calculating the product $U=QWQ^{-1}$.
\end{proof}

By Lemma \ref{lem:wn},
we have 
\[
W^n=\begin{pmatrix}
w_{1,1}\tau_n-\tau_{n-1} & w_{1,2}\tau_n\\
w_{2,1}\tau_n & \tau_{n+1}-w_{1,1}\tau_n
\end{pmatrix}.
\]

\begin{lemma}\label{lem:vij}
For $U^n=\rho_s(w^n)$, we have
\begin{eqnarray*}
v_{1,1}&=& u_{1,1}\tau_n-\tau_{n-1},\quad
v_{1,2}= u_{1,2}\tau_n,\\
v_{2,1}&=& u_{2,1}\tau_n\quad
v_{2,2}= \tau_{n+1}-u_{1,1}\tau_n.
\end{eqnarray*}
\end{lemma}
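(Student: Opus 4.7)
The plan is to mirror the argument used for Lemma \ref{lem:wn}, but without having to redo any diagonalization. Since $\rho_s$ is conjugate to $\rho$ via the matrix $Q$ introduced at the start of Section \ref{sec:longi}, we have $U = QWQ^{-1}$, and in particular $\det U = \det W = 1$ and $\mathrm{tr}\,U = \mathrm{tr}\,W = \tau$. So $U$ satisfies the very same characteristic equation as $W$.

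From here I would invoke the Cayley--Hamilton theorem: $U^2 - \tau U + I = 0$. Multiplying by $U^{n-1}$ gives the recursion $U^{n+1} = \tau U^n - U^{n-1}$, which is the same recursion satisfied by the scalars $\tau_k$. Starting from $U^0 = I$ and $U^1 = U$, a straightforward induction on $|n|$ (using $\tau_0 = 0$, $\tau_1 = 1$, and the fact $\tau_{-k} = -\tau_k$ to handle negative exponents) then yields the clean closed form
\begin{equation*}
U^n \;=\; \tau_n\, U \;-\; \tau_{n-1}\, I
\end{equation*}
valid for every integer $n$. Reading off the four entries on both sides immediately gives $v_{1,1} = \tau_n u_{1,1} - \tau_{n-1}$, $v_{1,2} = \tau_n u_{1,2}$, $v_{2,1} = \tau_n u_{2,1}$, and $v_{2,2} = \tau_n u_{2,2} - \tau_{n-1}$.

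Three of these are already in the form stated in the lemma. To reconcile the $(2,2)$-entry with the asserted expression $\tau_{n+1} - u_{1,1}\tau_n$, I would substitute $u_{2,2} = \tau - u_{1,1}$ (from $\mathrm{tr}\,U = \tau$) and apply the recursion $\tau_{n+1} = \tau\tau_n - \tau_{n-1}$:
\begin{equation*}
\tau_n u_{2,2} - \tau_{n-1} \;=\; \tau_n(\tau - u_{1,1}) - \tau_{n-1} \;=\; \tau_{n+1} - u_{1,1}\tau_n.
\end{equation*}
No step presents any serious obstacle; the main point requiring a little care is the base of the induction and its extension to negative $n$, but this is purely mechanical once the sign conventions $\tau_{-k} = -\tau_k$ and $\tau_0 = 0$ are invoked.
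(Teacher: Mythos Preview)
Your argument is correct and is in fact cleaner than the paper's. The paper proves Lemma~\ref{lem:vij} by computing $U^n = Q W^n Q^{-1}$ entry by entry, plugging in the formula for $W^n$ from Lemma~\ref{lem:wn}. For $v_{1,1}$, $v_{2,1}$, $v_{2,2}$ this is immediate, but the $(1,2)$-entry comes out as an expression in $\tau_{n-1}$, $\tau_n$, $\tau_{n+1}$ which the paper then simplifies by invoking the Riley equation $\phi_K(s,t)=0$ (twice) together with the recursion for $\tau_k$, eventually reducing it to $u_{1,2}\tau_n$.

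Your route bypasses all of that: since $U$ is conjugate to $W$ it has the same trace $\tau$ and determinant $1$, so Cayley--Hamilton plus the recursion for $\tau_k$ give $U^n=\tau_n U-\tau_{n-1}I$ directly, with no appeal to $\phi_K=0$. This shows the lemma is a purely linear-algebraic fact about any $SL_2$ matrix with trace $\tau$, and as a bonus it does not rely on the diagonalizability of $W$ established in Lemmas~\ref{lem:eigenvalue} and~\ref{lem:w12}. The paper's conjugation approach buys nothing here; your argument is shorter and exposes more clearly why the identity holds.
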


\begin{proof}
Calculate the product $U^n=QW^nQ^{-1}$.
Then $v_{1,1}=(w_{1,1}+w_{2,1}/(t-1))\tau_n-\tau_{n-1}=u_{1,1}\tau_n-\tau_{n-1}$
by Lemma \ref{lem:u-entry}.
For $v_{1,2}$,
we have
\[
\begin{split}
v_{1,2}&= \frac{\sqrt{t}}{(t-1)^2}\Bigl((t-1)\tau_{n-1}-(t-1)w_{1,1}\tau_n+(t-1)^2w_{1,2}\tau_n\\
&\quad +(t-1)\tau_{n+1}-(t-1)w_{1,1}\tau_n-w_{2,1}\tau_n\Bigr).
\end{split}
\]
Recall that the Riley polynomial is $\phi_K(s,t)=(w_{1,1}\tau_n-\tau_{n-1})+(1-t)w_{1,2}\tau_n$.
(See the proof of Proposition \ref{prop:riley}.)
Since $\phi_K(s,t)=0$,
$\tau_{n-1}=w_{1,1}\tau_n+(1-t)w_{1,2}\tau_n$.
Hence
\begin{eqnarray*}
v_{1,2}&=& \frac{\sqrt{t}}{(t-1)^2}\Bigl((t-1)\tau_{n+1}-(w_{2,1}+(t-1)w_{1,1})\tau_n\Bigr)\\
&=&\frac{\sqrt{t}}{(t-1)^2}\Bigl(\bigl((t-1)w_{2,2}-w_{2,1}\bigr)\tau_n-(t-1)\tau_{n-1}\Bigr)\\
&=&\frac{\sqrt{t}}{t-1}\Bigl((w_{2,2}-\frac{w_{2,1}}{t-1})\tau_n-\tau_{n-1}\Bigr)\\
&=&\frac{\sqrt{t}}{t-1}(u_{2,2}\tau_n-\tau_{n-1}),
\end{eqnarray*}
by using the recursion $\tau_{n+1}-(w_{1,1}+w_{2,2})\tau_n+\tau_{n-1}=0$.
By Lemma \ref{lem:u-entry},
\[
u_{2,2}=\frac{t-1}{\sqrt{t}}\left(u_{1,2}-\sqrt{t}\left(w_{1,2}-\frac{w_{1,1}}{t-1}\right)\right).
\]
Substituting this and $\phi_K(s,t)=0$,
\begin{eqnarray*}
v_{1,2}&=& u_{1,2}\tau_n+\frac{\sqrt{t}}{t-1}\Bigl(w_{1,1}\tau_n-\tau_{n-1}+(1-t)w_{1,2}\tau_n\Bigr)\\
&=& u_{1,2}\tau_n.
\end{eqnarray*}

It is straightforward to check $v_{2,1}$ and $v_{2,2}$. We omit them.
\end{proof}

Let $B_s$ be the $(1,1)$-entry of the matrix $\rho_s(\mathcal{L})$.

\begin{lemma}\label{lem:bs}
$B_s=-u_{2,1}/(u_{1,2}\sigma)$.
\end{lemma}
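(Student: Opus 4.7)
The plan is to start from the formula $B_s = v_{1,1}^2 + v_{2,1}^2/\sigma$ already obtained inside the proof of Proposition \ref{prop:longi}, and to exploit the vanishing of the off-diagonal entry $v_{1,1}v_{1,2}\sigma + v_{2,1}v_{2,2} = 0$ of $\rho_s(\mathcal{L})$ recorded in Remark \ref{rem:longi}. The goal is to first reduce the expression for $B_s$ to the simpler ratio $v_{1,1}/v_{2,2}$, and then to rewrite that ratio in terms of $u_{2,1}$ and $u_{1,2}$ by cancelling a common factor of $\tau_n$.

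First, I would solve the off-diagonal equation for $v_{2,1}$ and substitute the result into $v_{2,1}^2/\sigma$. After clearing denominators, the formula for $B_s$ collapses to
\[
B_s \;=\; \frac{v_{1,1}(v_{1,1}v_{2,2} - v_{1,2}v_{2,1})}{v_{2,2}},
\]
and since $\det U^n = 1$ the parenthesized factor is $1$, giving $B_s = v_{1,1}/v_{2,2}$. Second, I would use the off-diagonal relation in its alternative form $v_{1,1}/v_{2,2} = -v_{2,1}/(v_{1,2}\sigma)$, and apply Lemma \ref{lem:vij}, which gives $v_{2,1}/v_{1,2} = u_{2,1}/u_{1,2}$ since the common factor $\tau_n$ cancels. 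Chaining these identities yields $B_s = -u_{2,1}/(u_{1,2}\sigma)$, as claimed.

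Because the argument is purely algebraic, the main obstacle is only the bookkeeping needed to legalize the divisions. One needs $\sigma \neq 0$, which is immediate from $T > s + 2$ (so $(\sqrt{t} - 1/\sqrt{t})^2 - s = T - 2 - s > 0$, whence $\sigma$ is well-defined and nonzero), together with $\tau_n \neq 0$, $u_{1,2} \neq 0$, and $v_{2,2} \neq 0$. I expect these nonvanishings to be handled in the same spirit as Lemma \ref{lem:w12}, by using the Riley equation together with the careful choice of $t$ supplied by Proposition \ref{prop:root}; none of them looks to require a genuinely new idea.
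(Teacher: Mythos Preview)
Your proposal is correct and uses the same ingredients as the paper: the formula $B_s=v_{1,1}^2+v_{2,1}^2/\sigma$, the off-diagonal relation from Remark~\ref{rem:longi}, $\det U^n=1$, and Lemma~\ref{lem:vij}. The only difference is organizational: the paper multiplies $B_s$ by $v_{1,2}$ and obtains $v_{1,2}B_s=-v_{2,1}/\sigma$ in one stroke, whereas you route through $B_s=v_{1,1}/v_{2,2}$ first and then convert via the off-diagonal relation. Your detour costs you an extra nonvanishing hypothesis, $v_{2,2}\ne 0$, that the paper's direct manipulation never needs; you can simply skip that intermediate step. As for the deferred checks, the paper handles $v_{1,2}\ne 0$ exactly as you anticipate: if $v_{1,2}=0$ then $v_{2,1}=0$, hence $\tau_n=0$ (since $w_{2,1}\ne 0$ by Lemma~\ref{lem:w21}), and then the Riley equation forces $\tau_{n+1}=0$, so the recursion kills all $\tau_k$, contradicting $\tau_1=1$.
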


\begin{proof}
As noted in Remark \ref{rem:longi}, $v_{1,1}v_{1,2}\sigma+v_{2,1}v_{2,2}=0$.
Since $\det U^n=v_{1,1}v_{2,2}-v_{1,2}v_{2,1}=1$,
we have
\begin{eqnarray*}
v_{1,2}B_s &=& v_{1,1}^2v_{1,2}+\frac{v_{1,2}v_{2,1}^2}{\sigma}\\
  &=& v_{1,1}^2v_{1,2}+\frac{v_{2,1}}{\sigma}(v_{1,1}v_{2,2}-1)\\       
  &=& v_{1,1}^2v_{1,2}+\frac{v_{1,1}}{\sigma}(-v_{1,1}v_{1,2}\sigma)-\frac{v_{2,1}}{\sigma}\\
  &=& -\frac{v_{2,1}}{\sigma}.
\end{eqnarray*}

Assume $v_{1,2}=0$.
Then $v_{2,1}=0$.
By Lemmas \ref{lem:u-entry} and \ref{lem:vij}, $v_{2,1}=u_{2,1}\tau_n=w_{2,1}\tau_n/\sqrt{t}$.
Since $w_{2,1}\ne 0$ by Lemma \ref{lem:w21}, we have $\tau_n=0$.
Recall that $\phi_K(s,t)=(\tau_{n+1}-\tau_n)+(s+2-t-1/t)f_{m-1}g_{m-1}\tau_n$ is zero.
Thus $\tau_{n+1}=0$.
Then the recursive formula for $\tau_k$ implies $\tau_{n-1}=0$.
In turn, all $\tau_k=0$.   But this is impossible, because $\tau_1=1$.
Hence $v_{1,2}\ne 0$, so $B_s=-v_{2,1}/(v_{1,2}\sigma)$.

By Lemma \ref{lem:vij}, $v_{1,2}=u_{1,2}\tau_n$ and $v_{2,1}=u_{2,1}\tau_n$.
Thus we have shown that $B_s=-u_{2,1}/(u_{1,2}\sigma)$.
\end{proof}

\begin{proposition}\label{prop:bs}
For the longitude $\mathcal{L}$, the $(1,1)$-entry $B_s$ of $\rho_s(\mathcal{L})$ is given as
\begin{equation}
B_s=\frac{-f_m+tf_{m-1}}{-f_{m-1}+tf_m}.
\end{equation}
\end{proposition}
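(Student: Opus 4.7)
The plan is to start from the identity $B_s=-u_{2,1}/(u_{1,2}\sigma)$ given by Lemma \ref{lem:bs}, compute $u_{2,1}$ and $u_{1,2}\sigma$ separately in closed form, and observe that several messy factors cancel, leaving the stated ratio.

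First I would handle $u_{2,1}$. By Lemma \ref{lem:u-entry} we have $u_{2,1}=w_{2,1}/\sqrt{t}$, and Proposition \ref{prop:w-entry} gives $w_{2,1}=sf_mg_{m-1}-stf_{m-1}g_{m-1}=sg_{m-1}(f_m-tf_{m-1})$. Hence
\[
u_{2,1}=\frac{sg_{m-1}(f_m-tf_{m-1})}{\sqrt{t}}.
\]

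The main obstacle is the computation of $u_{1,2}$. Starting from Lemma \ref{lem:u-entry}, I would clear denominators and write
\[
\frac{(t-1)^2 u_{1,2}}{\sqrt{t}}=(t-1)^2 w_{1,2}-w_{2,1}-(t-1)(w_{1,1}-w_{2,2}).
\]
Using Proposition \ref{prop:w-entry}, I note that both $(t-1)^2w_{1,2}$ and $-w_{2,1}$ are scalar multiples of $g_{m-1}(f_m-tf_{m-1})$; after combining them and invoking the substitution $(t-1)^2=t(T-2)$ (with $T=t+1/t$), one obtains
\[
(t-1)^2 w_{1,2}-w_{2,1}=-g_{m-1}(f_m-tf_{m-1})(T+s-2).
\]
For the remaining term I would exploit Lemma \ref{lem:fg}(2) in the form $sg_{m-1}=f_m-f_{m-1}$ to write
\[
w_{1,1}-w_{2,2}=(f_m^2-f_{m-1}^2)-s\bigl(t-\tfrac{1}{t}\bigr)g_{m-1}^2=sg_{m-1}(f_m+f_{m-1})-sg_{m-1}^2\tfrac{(t-1)(t+1)}{t}.
\]
Expanding and collecting coefficients of $f_m$ and $f_{m-1}$ separately (and using $sg_{m-1}=f_m-f_{m-1}$ once more), the coefficient of $f_m$ collapses to $t(T-s-2)$ and the coefficient of $f_{m-1}$ to $-(T-s-2)$. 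Therefore
\[
\frac{(t-1)^2 u_{1,2}}{\sqrt{t}\,g_{m-1}}=(T-s-2)(tf_m-f_{m-1}),
\]
or equivalently, using $(t-1)^2=t(T-2)$,
\[
u_{1,2}=\frac{\sqrt{t}\,g_{m-1}(T-s-2)(tf_m-f_{m-1})}{t(T-2)}.
\]

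Finally I would multiply by $\sigma=s(T-2)/(T-s-2)$, so that the $(T-2)$ and $(T-s-2)$ factors cancel neatly:
\[
u_{1,2}\sigma=\frac{sg_{m-1}(tf_m-f_{m-1})}{\sqrt{t}}.
\]
Dividing gives
\[
B_s=-\frac{u_{2,1}}{u_{1,2}\sigma}=-\frac{f_m-tf_{m-1}}{tf_m-f_{m-1}}=\frac{-f_m+tf_{m-1}}{-f_{m-1}+tf_m},
\]
as claimed. The non-routine ingredient in this plan is the clean identification of the coefficient of $(tf_m-f_{m-1})$ inside $u_{1,2}$; everything else is a direct substitution and cancellation.
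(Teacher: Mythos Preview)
Your proposal is correct and follows essentially the same route as the paper: start from Lemma~\ref{lem:bs}, compute $u_{2,1}$ directly from Lemma~\ref{lem:u-entry} and Proposition~\ref{prop:w-entry}, and then reduce $u_{1,2}$ using the identity $f_m-f_{m-1}=sg_{m-1}$ from Lemma~\ref{lem:fg} until the factor $tf_m-f_{m-1}$ emerges. The only cosmetic difference is that you organize the simplification of $u_{1,2}$ via the substitution $T=t+1/t$ and $(t-1)^2=t(T-2)$, while the paper keeps the computation in terms of $t$ and $\sigma$ and identifies the coefficients of $f_m$ and $f_{m-1}$ as $t/\sigma$ and $-1/\sigma$ directly; the content is the same.
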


\begin{proof}
From Lemma \ref{lem:bs}, $B_s=-u_{2,1}/(u_{1,2}\sigma)$.
Then Lemma \ref{lem:u-entry} and Proposition \ref{prop:w-entry},
\begin{eqnarray*}
u_{1,2}&=&\frac{\sqrt{t}}{t-1}\left(f_{m-1}^2-f_m^2+(st-\frac{s}{t})g_{m-1}^2\right)-\frac{(t-1)^2+st}{\sqrt{t}(t-1)^2}g_{m-1}(f_{m}-tf_{m-1}),\\
u_{2,1}&=&\frac{s}{\sqrt{t}}g_{m-1}(f_{m}-tf_{m-1}).
\end{eqnarray*}

For the first term of $u_{1,2}$, Lemma \ref{lem:fg} implies
\begin{eqnarray*}
f_{m-1}^2-f_m^2+(st-\frac{s}{t})g_{m-1}^2&=& (f_{m-1}-f_{m})(f_{m-1}+f_{m})+(t-\frac{1}{t})sg_{m-1}^2\\
&=& -sg_{m-1}(f_{m-1}+f_{m})+(t-\frac{1}{t})sg_{m-1}^2\\
&=& sg_{m-1}\left(-f_{m-1}-f_{m}+(t-\frac{1}{t})g_{m-1}\right)\\
&=& sg_{m-1}\left(-f_{m-1}-f_{m}+(t-\frac{1}{t})\frac{f_{m}-f_{m-1}}{s}\right)\\
&=& sg_{m-1}\left(\frac{t^2-1-st}{st}f_m-\frac{t^2-1+st}{st}f_{m-1}\right).
\end{eqnarray*}
Thus,
dividing $u_{1,2}$ by $sg_{m-1}/\sqrt{t}$ gives
\[
\frac{t}{t-1}\left(\frac{t^2-1-st}{st}f_m-\frac{t^2-1+st}{st}f_{m-1}\right)-\frac{(t-1)^2+st}{s(t-1)^2}(f_m-tf_{m-1}).
\]
The coefficient of $f_m$ is $t/\sigma$, and
that of $f_{m-1}$ is $-1/\sigma$.
Hence we have
\[
B_s=-\frac{u_{2,1}}{u_{1,2}\sigma}=\frac{-f_m+tf_{m-1}}{-f_{m-1}+tf_m}.
\]
\end{proof}


\section{Limits}

Let $r=p/q$ be a rational number, and let $K(r)$ denote
the resulting manifold by $r$-surgery on $K$.
In other words, $K(r)$ is obtained by 
attaching a solid torus $V$ to the knot exterior $E(K)$ along their boundaries so that
the loop $x^p\mathcal{L}^q$ bounds a meridian disk of $V$, where
$x$ and $\mathcal{L}$ are a meridian and longitude of $K$.

Our representation $\rho_s: G\to SL_2(\mathbb{R})$ induces
a homomorphism $\pi_1(K(r))\to SL_2(\mathbb{R})$
if and only if $\rho_s(x)^p\rho_s(\mathcal{L})^q=I$.
Since both of $\rho_s(x)$ and $\rho_s(\mathcal{L})$ are diagonal
(see (\ref{eq:rho}) and Proposition \ref{prop:longi}),
this is equivalent to the single equation
\begin{equation}\label{eq:slope}
A_s^p B_s^q=1,
\end{equation}
where $A_s$ and $B_s$ are the $(1,1)$-entries of $\rho_s(x)$ and $\rho_s(\mathcal{L})$, respectively.
We remark that $A_s=\sqrt{t}\ (>1)$ is a positive real number,
so is $B_s$ by Proposition \ref{prop:longi}.
Hence the equation (\ref{eq:slope}) is furthermore equivalent to the equation
\begin{equation}
-\frac{\log B_s}{\log A_s}=\frac{p}{q}.
\end{equation}

Let $g:(0,\infty)\to \mathbb{R}$ be a function defined by
\[
g(s)=-\frac{\log B_s}{\log A_s}.
\]

We will examine the image of $g$.

\begin{lemma}\label{lem:key-limit}
\begin{itemize}
\item[(1)] 
\[
\lim_{s\to+0}t=\begin{cases}
\infty & \text{if $n\ne \pm 1$},\\
\frac{2m+1+\sqrt{4m+1}}{2m} & \text{if $n=1$},\\
\frac{2m-1+\sqrt{1-4m}}{2m} & \text{if $n=-1$}.
\end{cases}
\]
\item[(2)] $\displaystyle\lim_{s\to\infty}t=\infty$.
\item[(3)] $\displaystyle\lim_{s\to\infty}(t-s)=2$.
\item[(4)] $\displaystyle\lim_{s\to\infty}\frac{t}{s}=1$.
\end{itemize}
\end{lemma}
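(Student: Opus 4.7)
The plan is to reduce everything to the asymptotic behaviour of $T=t+1/t$ and then recover $t$ via the chosen branch $t=(T+\sqrt{T^2-4})/2$ (legal because $t>1$ is prescribed). The essential inputs are the two-sided estimate $s+2+c/(sg_{m-1}^2)<T<s+2+d/(sg_{m-1}^2)$ from Proposition \ref{prop:root} when $n\ne\pm 1$, and the explicit formulas $T=s+2+1/(f_m g_{m-1})$ (for $n=1$) and $T=s+2-1/(f_{m-1}g_{m-1})$ (for $n=-1$) from Example \ref{ex:riley}. In every regime we obtain $T=s+2+\varepsilon(s)$ with $\varepsilon(s)$ explicitly controlled.

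For part (1), consider $s\to 0^+$. When $n\ne\pm 1$, $g_{m-1}(0)$ is a nonzero constant (by Lemma \ref{lem:fg-formula} together with the extensions $f_{-k}=f_{k-1}$ and $g_{-k}=-g_{k-2}$), so the lower bound $s+2+c/(sg_{m-1}^2)\to\infty$, forcing $T\to\infty$ and hence $t\to\infty$. When $n=1$ (so $m>0$), $f_m(0)=1$ and $g_{m-1}(0)=m$ give $T\to 2+1/m=(2m+1)/m$, and solving $t^2-Tt+1=0$ on the branch $t>1$ yields $t\to(2m+1+\sqrt{4m+1})/(2m)$. The case $n=-1$ ($m<0$) is symmetric: the sign conventions unwind to $f_{m-1}g_{m-1}\to -|m|$, so $T\to(2m-1)/m$, and selecting the appropriate root of the quadratic gives the claimed formula.

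For parts (2)-(4), consider $s\to\infty$. In every case the bounds/formulas give $T>s+2\to\infty$, and since $t\ge T/2$, this proves (2). Moreover $\varepsilon(s)\to 0$ because $\varepsilon(s)$ is (up to sign) a reciprocal of a polynomial in $s$ of positive degree whenever $m\ne 0$: indeed $\deg(sg_{m-1}^2)\ge 1$, and $\deg(f_mg_{m-1}),\deg(f_{m-1}g_{m-1})\ge 1$ by Lemma \ref{lem:fg-formula} and the extension conventions. Hence $T-s\to 2$. Combined with $T-t=1/t\to 0$ from (2), we obtain $t-s=(t-T)+(T-s)\to 0+2=2$, which is (3). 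Finally (4) is immediate from $t/s=1+(t-s)/s\to 1$.

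No step is a genuine obstacle; the only care needed is the sign bookkeeping for negative $m$, handled uniformly by the identities $f_{-k}=f_{k-1}$ and $g_{-k}=-g_{k-2}$ (for $k\ge 2$) together with the closed forms in Lemma \ref{lem:fg-formula}. Once these conventions are in hand, the four limits reduce to elementary computations with the sandwich bounds and the quadratic $t^2-Tt+1=0$.
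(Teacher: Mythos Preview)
Your proof is correct and follows essentially the same route as the paper: both arguments reduce to the behaviour of $T=t+1/t$, using the explicit solutions from Example~\ref{ex:riley} when $n=\pm 1$ and the sandwich bounds from Proposition~\ref{prop:root} otherwise, then recover $t$ from the branch $t=(T+\sqrt{T^2-4})/2$. Your presentation is slightly more explicit in places (e.g.\ writing $t-s=(t-T)+(T-s)$ for part~(3)), but the underlying ideas are identical.
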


\begin{proof}
(1) 
If $n=1$, then $T=s+2+1/(f_mg_{m-1})$ is the unique solution for $\phi_K(s,T)=0$
(see Example \ref{ex:riley}).
From Lemma \ref{lem:fg-formula}, we have $\lim_{s\to+0}f_m=1$ and $\lim_{s\to+0}g_{m-1}=m$.
Hence $\lim_{s\to +0}T=2+1/m$.
(Recall $m>0$ when $n=1$.)
Since $t=(T+\sqrt{T^2-4})/2$, $\lim_{s\to+0}t=(2m+1+\sqrt{4m+1})/(2m)$.

If $n=-1$, then $T=s+2-1/(f_{m-1}g_{m-1})$ by Example \ref{ex:riley}.
Recall $m<0$ when $n=-1$.  Then $\lim_{s\to+0}f_{m-1}=1$ and $\lim_{s\to+0}g_{m-1}=m$.
Thus $\lim_{s\to+0}T=2-1/m$, so 
$\lim_{s\to+0}t=(2m-1+\sqrt{1-4m})/(2m)$.

Assume $n\ne \pm 1$.
From Proposition \ref{prop:root}, we have $s+2+c/(sg_{m-1}^2)<T$, where $c$ is a positive constant.
Hence $\lim_{s\to+0}T=\lim_{s\to+0}t=\infty$.

(2) As $T>s+2$, $\lim_{s\to\infty}T=\lim_{s\to\infty}t=\infty$.

(3) Since $s+2<t+1/t<s+2+4/(sg_{m-1}^2)$, (2) implies $\lim_{s\to\infty}(t-s)=2$.

(4) From $s+2<T<s+2+4/(sg_{m-1}^2)$ again, we have $\lim_{s\to\infty}T/s=1$, which
implies $\lim_{s\to\infty}t/s=1$
\end{proof}

Let $F_k=t^{k-1}(-f_k+tf_{k-1})$ for any integer $k$.

\begin{lemma}\label{lem:f-limit}
If $0<k\le |m|$, then $\lim_{s\to\infty}F_k=1$.
\end{lemma}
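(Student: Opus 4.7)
My plan is to prove the lemma by induction on $k$.

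For the base case $k=1$, we have $F_1 = -f_1 + tf_0 = t - s - 1$, so Lemma~\ref{lem:key-limit}(3) gives $\lim_{s\to\infty} F_1 = 2 - 1 = 1$.

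For the inductive step with $2 \le k \le |m|$, the heart of the argument is the telescoping identity
\[
F_k - F_{k-1} = t^{k-1} f_{k-1}(T - s - 2).
\]
I would derive this by direct expansion:
\[
F_k - F_{k-1} = t^{k-2}(t^2+1)f_{k-1} - t^{k-1}(f_k + f_{k-2}),
\]
and then substituting the Chebyshev-type recursion $f_k + f_{k-2} = (s+2)f_{k-1}$ from (\ref{eq:f}) together with the identity $t^2 + 1 = tT$ (simply $t + 1/t = T$ multiplied by $t$) makes the bracket collapse to $t(T - s - 2)$.

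With the identity in hand, I would bound each factor on the right-hand side as $s \to \infty$. By Lemma~\ref{lem:key-limit}(4) and Lemma~\ref{lem:fg-formula}, $t/s \to 1$ and $f_{k-1}$ is a monic polynomial of degree $k-1$, so $t^{k-1} f_{k-1} = O(s^{2k-2})$. To bound $T - s - 2$, I would split into cases: when $n = \pm 1$, Example~\ref{ex:riley} gives the explicit values $T - s - 2 = \pm 1/(f_m g_{m-1})$ (or $\mp 1/(f_{m-1} g_{m-1})$), while for $n \ne \pm 1$ Proposition~\ref{prop:root} gives $0 < T - s - 2 < d/(s g_{m-1}^2)$ for a constant $d$. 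Combining Lemma~\ref{lem:fg-formula} with the sign conventions $f_{-k} = f_{k-1}$ and $g_{-k} = -g_{k-2}$ shows $|f_{m-1}|$, $|f_m|$, $|g_{m-1}|$ all grow like monic polynomials of degrees $|m|-1$, $|m|$, $|m|-1$ in $s$, giving $|T - s - 2| = O(s^{1 - 2|m|})$ uniformly in all cases.

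Multiplying the two estimates yields $|F_k - F_{k-1}| = O(s^{2k - 2|m| - 1})$, which tends to $0$ because $k \le |m|$. Combined with $F_1 \to 1$, induction then gives $F_k \to 1$. The main obstacle in this plan is deriving the telescoping identity cleanly and then carefully reducing the negative-index $f_{m-1}$ and $g_{m-1}$ to positive-index polynomials via the conventions in Section~\ref{sec:knotgroup} before applying the degree estimates from Lemma~\ref{lem:fg-formula}; the analytic estimate itself is then routine.
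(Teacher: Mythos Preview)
Your proposal is correct and follows essentially the same approach as the paper: both arguments establish the induction step via the relation $F_k - F_{k-1} = t^{k-1}f_{k-1}(T-s-2)$ (the paper obtains it by multiplying the Riley equation by $t^{k-1}f_{k-1}$ in each case, you compute $F_k-F_{k-1}$ directly and factor), and both then use the degree bounds on $f_{k-1}$, $g_{m-1}$ together with Lemma~\ref{lem:key-limit}(4) to show the difference tends to zero. One small slip: for $m<0$ the conventions give $\deg f_{m-1}=|m|$ and $\deg f_m=|m|-1$ (the reverse of what you state), but since only the product $f_{m-1}g_{m-1}$ enters the $n=-1$ estimate, your conclusion $|T-s-2|=O(s^{1-2|m|})$ is still correct.
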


\begin{proof}
First, $F_1=-f_1+tf_0=t-s-1$.
Hence $\lim_{s\to\infty}F_1=1$ by Lemma \ref{lem:key-limit}(3).

Let $n\ne\pm 1$.
By Proposition \ref{prop:root}, we have
\begin{equation}\label{eq:root}
s+2+\frac{c}{sg_{m-1}^2}<t+\frac{1}{t}<s+2+\frac{d}{sg_{m-1}^2},
\end{equation}
where $c$ and $d$ are constants depending on only $n$.
Multiplying $t^{k-1}f_{k-1}$ to (\ref{eq:root})
gives
\[
(s+2)f_{k-1}t^{k-1}+\frac{ct^{k-1}f_{k-1}}{sg_{m-1}^2}<t^kf_{k-1}+t^{k-2}f_{k-1}<(s+2)f_{k-1}t^{k-1}+\frac{dt^{k-1}f_{k-1}}{sg_{m-1}^2}.
\]
Since $(s+2)f_{k-1}=f_k+f_{k-2}$ by the recursion,
\begin{eqnarray*}
(t^{k-1}f_{k-2}-t^{k-2}f_{k-1})+\frac{ct^{k-1}f_{k-1}}{sg_{m-1}^2}&<&t^kf_{k-1}-t^{k-1}f_k\\
&<&(t^{k-1}f_{k-2}-t^{k-2}f_{k-1})+\frac{dt^{k-1}f_{k-1}}{sg_{m-1}^2}.
\end{eqnarray*}

Then we have
\begin{equation}
F_{k-1}+\frac{ct^{k-1}f_{k-1}}{sg_{m-1}^2}<F_k<F_{k-1}+\frac{dt^{k-1}f_{k-1}}{sg_{m-1}^2}.
\end{equation}
The degree of $f_{k-1}$ is $k-1$, 
but that of $g_{m-1}$ is $|m|-1$.
Hence Lemma \ref{lem:key-limit}(4) implies
\[
\lim_{s\to\infty}\frac{ct^{k-1}f_{k-1}}{sg_{m-1}^2}=
\lim_{s\to\infty}\frac{dt^{k-1}f_{k-1}}{sg_{m-1}^2}=0
\]
as long as $k\le |m|$.
Thus $\lim_{s\to\infty}F_k=1$ will follow from $\lim_{s\to\infty}F_{k-1}=1$.

Suppose $n=1$.
Riley's equation has the unique solution
\begin{equation}\label{eq:n1}
t+\frac{1}{t}=s+2+\frac{1}{f_mg_{m-1}}
\end{equation}
(see Example \ref{ex:riley}).

Multiplying $t^{k-1}f_{k-1}$ to (\ref{eq:n1})
gives
\[
t^kf_{k-1}+t^{k-2}f_{k-1}=(s+2)f_{k-1}t^{k-1}+\frac{t^{k-1}f_{k-1}}{f_mg_{m-1}}.
\]
Since $(s+2)f_{k-1}=f_k+f_{k-2}$,
\[
t^kf_{k-1}-t^{k-1}f_k=t^{k-1}f_{k-2}-t^{k-2}f_{k-1}+\frac{t^{k-1}f_{k-1}}{f_mg_{m-1}}.
\]
That is, $F_k=F_{k-1}+t^{k-1}f_{k-1}/(f_mg_{m-1})$.
Hence the fact that $\lim_{s\to\infty}F_{k-1}=1$ and $\lim_{s\to\infty}t^{k-1}f_{k-1}/(f_mg_{m-1})=0$
imply $\lim_{s\to\infty}F_k=1$.

Finally, suppose $n=-1$.
Riley's equation has the unique solution
\[
t+\frac{1}{t}=s+2-\frac{1}{f_{m-1}g_{m-1}}
\]
as in Example \ref{ex:riley}.
Multiplying $t^{k-1}f_{k-1}$ gives
$F_k=F_{k-1}-t^{k-1}f_{k-1}/(f_{m-1}g_{m-1})$.
Since $m<0$, $f_{m-1}$ and $g_{m-1}$
have degree $|m|$ and $|m|-1$, respectively.
Thus the fact that $\lim_{s\to\infty}F_{k-1}=1$ and $\lim_{s\to\infty}t^{k-1}f_{k-1}/(f_{m-1}g_{m-1})=0$
imply $\lim_{s\to\infty}F_k=1$, again.
\end{proof}

\begin{lemma}\label{lem:bslimit}
\begin{itemize}
\item[(1)] $\displaystyle\lim_{s\to+0}B_s=1$.
\item[(2)] $\displaystyle\lim_{s\to\infty}B_s\,t^{2m}=1$.
\end{itemize}
\end{lemma}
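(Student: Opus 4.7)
The plan is to work directly from the closed form
$$B_s=\frac{-f_m+tf_{m-1}}{-f_{m-1}+tf_m}$$
of Proposition~\ref{prop:bs}, and to extract each limit by controlling the asymptotics of $f_k$, $t$, and their ratios via the previous lemmas.

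For part~(1), a quick induction from Lemma~\ref{lem:fg-formula} (using the extension $f_{-k}=f_{k-1}$) shows $f_k(0)=1$ for every integer $k$. When $n\neq\pm 1$, Lemma~\ref{lem:key-limit}(1) gives $t\to\infty$ as $s\to +0$, so dividing both numerator and denominator of $B_s$ by $t$ yields
$$B_s=\frac{f_{m-1}-f_m/t}{f_m-f_{m-1}/t}\longrightarrow 1.$$
When $n=\pm 1$, $t$ converges to the explicit finite value $t_0$ given by Lemma~\ref{lem:key-limit}(1); a direct check rules out $t_0=1$ in either case (the equation $t_0=1$ reduces to $\sqrt{4m+1}=-1$ or $\sqrt{1-4m}=1$, both impossible for $m\neq 0$), so both numerator and denominator tend to the common nonzero limit $t_0-1$, and $B_s\to 1$.

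For part~(2), the crucial observation is that $F_m=t^{m-1}(tf_{m-1}-f_m)$, whose $s\to\infty$ behavior is already pinned down by Lemma~\ref{lem:f-limit}, is essentially the numerator of $B_s$ up to a power of $t$. When $m>0$, I rewrite
$$B_s\,t^{2m}=\frac{t^{2m}(tf_{m-1}-f_m)}{tf_m-f_{m-1}}=\frac{F_m}{\,f_m/t^m-f_{m-1}/t^{m+1}\,}.$$
Lemma~\ref{lem:f-limit} gives $F_m\to 1$ (taking $k=m=|m|$), while Lemma~\ref{lem:key-limit}(4) gives $t/s\to 1$; since $f_k$ is monic of degree $k$ for $k\geq 0$, it follows that $f_m/t^m\to 1$ and $f_{m-1}/t^{m+1}\to 0$, so the denominator tends to $1$ and the claim follows. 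When $m<0$, set $l=|m|$, so that $f_m=f_{l-1}$ and $f_{m-1}=f_l$ by the extended definition; the numerator and denominator of $B_s$ swap their degree roles, and one obtains the symmetric expression
$$B_s\,t^{2m}=\frac{(tf_l-f_{l-1})/t^{l+1}}{F_l}=\frac{f_l/t^l-f_{l-1}/t^{l+1}}{F_l},$$
with $F_l\to 1$ via Lemma~\ref{lem:f-limit} applied to $k=l=|m|$, and the same monic-degree estimates deliver the limit $1$.

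The main obstacle is organizational rather than technical: one must carefully track the index conventions when $m<0$, since the extension $f_{-l}=f_{l-1}$ reverses the relative degrees of the numerator and denominator of $B_s$ and forces the use of $F_{|m|}$ in place of $F_m$. Once that reindexing is made explicit, the $m<0$ case is a mirror image of the $m>0$ case.
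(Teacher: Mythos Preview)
Your proof is correct and follows essentially the same route as the paper's own argument: both parts work from the closed form $B_s=(-f_m+tf_{m-1})/(-f_{m-1}+tf_m)$ of Proposition~\ref{prop:bs}, invoke $f_k(0)=1$ and Lemma~\ref{lem:key-limit} for~(1), and for~(2) factor $B_s t^{2m}$ into $F_{|m|}$ times a monic-polynomial ratio handled via Lemma~\ref{lem:f-limit} and Lemma~\ref{lem:key-limit}(4), treating $m>0$ and $m<0$ separately with the reindexing $f_m=f_{l-1}$, $f_{m-1}=f_l$. Your treatment of~(1) is in fact slightly more careful than the paper's: you explicitly split off the case $n=\pm1$ (finite $t_0$) and verify $t_0\neq1$, while the paper leaves this implicit.
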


\begin{proof}
(1) By Proposition \ref{prop:bs},
\[
B_s=\frac{-f_m+tf_{m-1}}{-f_{m-1}+tf_m}.
\]
By Lemma \ref{lem:fg-formula}, $\lim_{s\to+0}f_m=\lim_{s\to+0}f_{m-1}=1$.
Thus Lemma \ref{lem:key-limit}(1) implies $\lim_{s\to+0}B_s=1$.

(2)  Let $m>0$.  We decompose $B_st^{2m}$ as
\begin{eqnarray*}
B_st^{2m}&=&t^{m-1}(-f_m+tf_{m-1})\cdot \frac{t^{m+1}}{-f_{m-1}+tf_m}\\
&=& F_m\cdot\frac{t^{m+1}}{-f_{m-1}+tf_m}.
\end{eqnarray*}
Since the degree of $f_k\ (k>0)$ is $k$ and $f_m$ is monic,
\[
\lim_{s\to\infty}\frac{t^{m+1}}{-f_{m-1}+tf_m}=1.
\]
Then
we have $\lim_{s\to\infty}B_st^{2m}=1$ by combined with Lemma \ref{lem:f-limit}.

Let $m<0$.  Set $l=-m>0$.
Recall $f_m=f_{l-1}$ and $f_{m-1}=f_{l}$.
We decompose $B_st^{2m}$ as
\[
B_st^{2m}=\frac{-f_{l-1}+tf_l}{t^{l+1}}\cdot \frac{1}{t^{l-1}(-f_l+tf_{l-1})}=\frac{-f_{l-1}+tf_l}{t^{l+1}}\cdot \frac{1}{F_l}.
\]
As before,
\[
\lim_{s\to\infty}\frac{-f_{l-1}+tf_l}{t^{l+1}}=1\ \text{and} \lim_{s\to\infty}F_l=1.
\]
Thus $\lim_{s\to\infty}B_st^{2m}=1$ again.
\end{proof}

\begin{proposition}\label{prop:g-image}
The image of $g$ contains an open interval 
$(0,4m)$ \textup{(}resp. $(4m,0)$\textup{)} if $m>0$ \textup{(}resp. $m<0$\textup{)}.
\end{proposition}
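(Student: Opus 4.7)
The plan is to evaluate the two boundary limits $\lim_{s \to 0+} g(s)$ and $\lim_{s \to \infty} g(s)$ along a continuous branch of solutions of Riley's equation, and then invoke the Intermediate Value Theorem. Recall that $A_s = \sqrt{t} > 1$ and, by Proposition \ref{prop:longi}, $B_s > 0$, so $g(s) = -\log B_s/\log A_s$ is a well-defined real number.

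First, I establish continuity. Both $\log A_s$ and $\log B_s$ depend continuously on $(s,T)$ (since $t = (T+\sqrt{T^2-4})/2$ and $B_s$ is given by the rational function in Proposition \ref{prop:bs}), so it suffices to exhibit a continuous branch of solutions $T(s)$ of $\phi_K(s,T) = 0$ on $(0,\infty)$. When $n = \pm 1$, Example \ref{ex:riley} gives an explicit unique branch. When $n \ne \pm 1$, Proposition \ref{prop:root} furnishes, for every $s>0$, a solution in a small interval whose endpoints depend continuously on $s$ and at which $\Phi$ takes strict opposite signs; tracking (say) the smallest root of $\Phi$ in this interval as $s$ varies, or equivalently applying the Implicit Function Theorem to $\phi_K$ at regular points of the zero curve, produces a continuous branch $T(s)$ on $(0,\infty)$. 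Along this branch $g$ is continuous.

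Second, I compute $\lim_{s \to 0+} g(s) = 0$. By Lemma \ref{lem:bslimit}(1), $B_s \to 1$, so $\log B_s \to 0$. By Lemma \ref{lem:key-limit}(1), $\lim_{s \to 0+} t$ equals $\infty$ (if $n \ne \pm 1$) or the explicit constant strictly greater than $1$ recorded there (if $n = \pm 1$); in either case $\log A_s = \tfrac{1}{2}\log t$ tends to a strictly positive limit (possibly $+\infty$), and so $g(s) \to 0$.

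Third, I compute $\lim_{s \to \infty} g(s) = 4m$. Lemma \ref{lem:bslimit}(2) gives $B_s\, t^{2m} \to 1$, that is, $\log B_s + 2m \log t \to 0$; and Lemma \ref{lem:key-limit}(2) gives $\log t \to \infty$. Dividing by $\log A_s = \tfrac{1}{2}\log t$,
\[
g(s) \;=\; -\frac{\log B_s}{\tfrac{1}{2}\log t} \;=\; \frac{2m\log t + o(1)}{\tfrac{1}{2}\log t} \;\longrightarrow\; 4m.
\]
Combining the continuity of $g$ on $(0,\infty)$ with these two limits, the Intermediate Value Theorem yields that the image of $g$ contains every value strictly between $0$ and $4m$, that is, $(0,4m)$ if $m>0$ and $(4m,0)$ if $m<0$.

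The main technical obstacle is the continuity of the branch $T(s)$ when $n \ne \pm 1$, since Riley's equation may a priori have several solutions for a given $s$; once a continuous branch is selected inside the narrow strip of Proposition \ref{prop:root}, the two limit calculations are direct substitutions into Lemmas \ref{lem:key-limit} and \ref{lem:bslimit}.
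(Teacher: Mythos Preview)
Your proof follows essentially the same approach as the paper: compute $\lim_{s\to 0+}g(s)=0$ via Lemma \ref{lem:bslimit}(1) and Lemma \ref{lem:key-limit}(1), compute $\lim_{s\to\infty}g(s)=4m$ via Lemma \ref{lem:bslimit}(2) and Lemma \ref{lem:key-limit}(2), and conclude by the Intermediate Value Theorem. You are in fact more careful than the paper in that you make the continuity of $g$ (and hence the need for a continuous branch $T(s)$) explicit, whereas the paper simply writes down the two limits and asserts the conclusion; be aware, however, that your sketch for selecting a continuous branch (``tracking the smallest root'' or invoking the Implicit Function Theorem at regular points) is itself not fully rigorous without further work, since the smallest root in the strip could in principle jump and regularity along the whole curve is not verified.
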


\begin{proof}
By Lemma \ref{lem:bslimit}(1),
$\lim_{s\to+0}\log B_s=0$.
Hence
\[
\lim_{s\to +0}g(s)=-\lim_{s\to +0}\frac{\log B_s}{\log A_s}=-\lim_{s\to+0}\frac{\log B_s}{\log\sqrt{t}}=0.
\]

Also, we have $\lim_{s\to\infty}(\log B_s+2m\log t)=0$ by Lemma \ref{lem:bslimit}(2).
Thus
\[
\lim_{s\to \infty}g(s)=-\lim_{s\to\infty}\frac{\log B_s}{\log A_s}=
-\lim_{s\to\infty}\frac{2\log B_s}{\log t}=4m.
\]
Hence the image of $g$ contains an interval $(0,4m)$ or $(4m,0)$, according as the sign of $m$.
\end{proof}


\section{Proof of Theorem}

The universal covering group $\widetilde{SL_2(\mathbb{R})}$
of $SL_2(\mathbb{R})$ 
can be described as
\[
\widetilde{SL_2(\mathbb{R})}=\{(\gamma,\omega)\mid \gamma\in \mathbb{C},|\gamma|<1,-\infty<\omega<\infty\},
\]
 (see \cite{B}).  Let $\chi:\widetilde{SL_2(\mathbb{R})}\to SL_2(\mathbb{R})$
be the covering projection.
Then $\ker \chi=\{(0,2m\pi)\mid m\in \mathbb{Z}\}$ is isomorphic to $\mathbb{Z}$.

Since the knot exterior $E(K)$ of $K$ satisfies $H^2(E(K);\mathbb{Z})=0$,
any $\rho_s: G\to SL_2(\mathbb{R})$ lifts to a representation
$\tilde{\rho}: G\to\widetilde{SL_2(\mathbb{R})}$ (\cite{G}).
Moreover, any two lifts $\tilde{\rho}$ and $\tilde{\rho}'$ are
related as follows:
\[
\tilde{\rho}'(g)=h(g)\tilde{\rho}(g),
\]
where $h:G\to \ker \chi\subset\widetilde{SL_2(\mathbb{R})}$.
Since $\ker \chi$ is abelian,
the homomorphism $h$ factors through $H_1(E(K))$, so
it is determined only by the value $h(x)$ of a meridian $x$ (see \cite{Kh}).

\begin{lemma}\label{lem:key}
Let $\tilde{\rho}: G\to \widetilde{SL_2(\mathbb{R})}$ be a lift of $\rho_s$.
Then replacing $\tilde{\rho}$ by a representation
$\tilde{\rho}'=h\cdot \tilde{\rho}$ for some $h:G\to \widetilde{SL_2(\mathbb{R})}$,
we can suppose that $\tilde{\rho}(\pi_1(\partial E(K)))$ is contained in the subgroup $(-1,1)\times \{0\}$ of $\widetilde{SL_2(\mathbb{R})}$.
\end{lemma}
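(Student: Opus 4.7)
My strategy is to first confine $\rho_s(\pi_1(\partial E(K)))$ to the positive-diagonal subgroup, then exploit that the lift ambiguity $h$ affects only the meridian, and finally verify by a deformation argument that the longitude's lift automatically sits in the identity component $(-1,1)\times\{0\}$.

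First, by formula (\ref{eq:rho}) the image $\rho_s(x)$ is a positive diagonal matrix, and by Proposition \ref{prop:longi} so is $\rho_s(\mathcal{L})$. Since $\pi_1(\partial E(K))$ is generated by $x$ and $\mathcal{L}$, the peripheral image lies in the positive diagonal subgroup $D^+\cong\mathbb{R}$ of $SL_2(\mathbb{R})$. Because $D^+$ is simply connected, $\chi^{-1}(D^+)$ is a disjoint union of components---each mapping homeomorphically onto $D^+$---indexed by $\ker\chi\cong\mathbb{Z}$, and the identity component $\widetilde{D^+}$ is precisely $(-1,1)\times\{0\}$ in Bergman's coordinates.

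Next, I use the restriction on $h$ already recorded just above the lemma: since $\ker\chi$ is abelian, $h:G\to\ker\chi$ factors through $H_1(E(K);\mathbb{Z})=\mathbb{Z}\langle x\rangle$ and is determined by $h(x)$, while $h(\mathcal{L})=0$ automatically because $\mathcal{L}$ is null-homologous. Therefore $\tilde{\rho}(\mathcal{L})$ is independent of the chosen lift---the component of $\chi^{-1}(D^+)$ containing $\tilde{\rho}(\mathcal{L})$ is a canonical invariant $c(\rho_s)\in\ker\chi$ of $\rho_s$. Granting $c(\rho_s)=0$, I pick the unique $h(x)$ translating $\tilde{\rho}(x)$ into $\widetilde{D^+}$; then $\tilde{\rho}'=h\cdot\tilde{\rho}$ sends both $x$ and $\mathcal{L}$, hence all of $\pi_1(\partial E(K))$, into $(-1,1)\times\{0\}$.

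The main obstacle is showing $c(\rho_s)=0$. I would argue by deformation in $s$: the family $\{\rho_{s'}\}_{s'>0}$ varies continuously with $s'$ (through the continuous dependence of $t$ on $s'$), so lifts can be chosen continuously by path-lifting in the covering, and $c(\rho_{s'})$ is a locally constant $\mathbb{Z}$-valued function, hence constant on the connected interval $(0,\infty)$. To pin down the value, take $s'\to 0^+$. By Lemma \ref{lem:bslimit}(1), $B_{s'}\to 1$, so $\rho_{s'}(\mathcal{L})\to I$. For $n=\pm 1$ (Lemma \ref{lem:key-limit}(1)), $t$ has a finite limit $t_0$, so $\rho_{s'}$ itself extends continuously to a reducible, upper-triangular representation $\rho_0$ whose canonical lift into the simply connected Borel subgroup $B^+\subset SL_2(\mathbb{R})$ satisfies $\tilde{\rho}_0(\mathcal{L})=e$; this forces $c=0$. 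For $n\ne\pm 1$ the limit $t\to\infty$ requires a rescaling of $\rho_{s'}$ or a different degeneration of the representation to an analogous ``parabolic'' limit, and arranging this degeneration cleanly is the most delicate point of the argument.
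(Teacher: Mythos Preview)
Your reduction is correct—the crux is indeed that $\tilde\rho(\mathcal{L})$ (lift-independent since $\mathcal{L}$ is null-homologous) must land in the identity component $(-1,1)\times\{0\}$, after which adjusting $h(x)$ handles the meridian—but your method for this crux has a genuine gap and is not the paper's route. For $n\ne\pm1$, Proposition~\ref{prop:root} gives only \emph{existence} of $T$ in an interval, not uniqueness or continuous dependence on $s'$, so the continuous family $s'\mapsto\rho_{s'}$ you need for path-lifting is not established. Worse, and as you yourself concede, for $n\ne\pm1$ the limit $s'\to0^+$ sends $t\to\infty$ (Lemma~\ref{lem:key-limit}(1)), so there is no limiting representation in $SL_2(\mathbb{R})$ against which to pin down your invariant $c$; the suggested ``rescaling'' or ``parabolic limit'' remains a sketch. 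Since the lemma must hold for all $m,n$, this is not a side case but the generic one.

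The paper does not argue by deformation. It cites \cite{HT1}, Section~7, and observes that only the genus-one property of $K$ is used there; since every $K(m,n)$ has genus one, the argument carries over verbatim. The relevance of genus one is that $\mathcal{L}$ is a single commutator in $G$ (explicitly $\mathcal{L}=\mu_3\mu_2\mu_3^{-1}\mu_2^{-1}$ in the surgery presentation, or abstractly because $\mathcal{L}$ bounds a once-punctured torus), so $\tilde\rho(\mathcal{L})$ is a commutator in $\widetilde{SL_2(\mathbb{R})}$. This lets one control the component of $\tilde\rho(\mathcal{L})$ directly for each fixed $s$, with no limiting procedure and hence uniformly in $s,m,n$.
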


\begin{proof}
This is proved in \cite[Section 7]{HT1} for twist knots.
Since our knot $K$ has genus one, the argument works without any change.
\end{proof}

\begin{proof}[Proof of Theorem \ref{thm:main}]
Suppose that $m,n>0$.
Let $r=p/q\in (0,4m)$.
By Proposition \ref{prop:g-image},
we can find $s$ so that $g(s)=r$.
Choose a lift $\tilde{\rho}$ of $\rho_s$ so that
$\tilde{\rho}(\pi_1(\partial E(K)))\subset (-1,1)\times\{0\}$ (Lemma \ref{lem:key}).
Then $\rho_s(x^p\mathcal{L}^q)=I$, so $\chi(\tilde{\rho}(x^p\mathcal{L}^q))=I$.
This means that $\tilde{\rho}(x^p\mathcal{L}^q)$ lies in $\ker\chi=\{(0,2m\pi)\mid m\in \mathbb{Z}\}$.
Hence $\tilde{\rho}(x^p\mathcal{L}^q)=(0,0)$.
Then $\tilde{\rho}$ can induce a homomorphism $\pi_1(K(r))\to \widetilde{SL_2(\mathbb{R})}$
with non-abelian image.
Recall that $\widetilde{SL_2(\mathbb{R})}$ is left-orderable (\cite{Be}) and
any (non-trivial) subgroup of a left-orderable group is left-orderable.
Since $K(r)$ is irreducible \cite{HT}, 
$\pi_1(K(r))$ is left-orderable by \cite[Theorem 1.1]{BRW}.
For $r=0$, $K(0)$ is irreducible and has positive betti number.
Hence $\pi_1(K(0))$ is left-orderable by \cite[Corollary 3.4]{BRW}.
Thus we have shown that any slope in $[0,4m)$ is left-orderable for $K=K(m,n)$.
 
If we apply this argument for $K(n,m)$,
then any slope in $[0,4n)$ is shown to be left-orderable.
Since $K(n,m)$ is equivalent to the mirror image of $K(m,n)$,
any slope in $(-4n,0]$ is left-orderable for $K(m,n)$.
Thus $I=(-4n,4m)$ consists of left-orderable slopes for $K=K(m,n)$.

The other cases are proved similarly.
\end{proof}

\bibliographystyle{amsplain}

\begin{thebibliography}{BGW}

\bibitem{B}
V. Bargmann, 
\textit{Irreducible unitary representations of the Lorentz group},
Ann. of Math. \textbf{48} (1947), 568--640. 

\bibitem{Be}
G. Bergman, 
\textit{Right orderable groups that are not locally indicable}, 
Pacific J. Math. \textbf{147} (1991), 243--248. 


\bibitem{BGW}
S. Boyer, C. McA. Gordon and L. Watson,
\textit{On $L$-spaces and left-orderable fundamental groups},
to appear in Math. Ann.

\bibitem{BRW}
S. Boyer, D. Rolfsen and B. Wiest,
\textit{Orderable 3-manifold groups},
Ann. Inst. Fourier (Grenoble) \textbf{55} (2005), 243--288.


\bibitem{BZ}
G. Burde and H. Zieschang,
\textit{Knots}, de Gruyter Studies in Mathematics, 5. Walter de Gruyter \& Co., Berlin, 2003.

\bibitem{CLW}
A. Clay, T. Lidman and L. Watson,
\textit{Graph manifolds, left-orderability and amalgamation},
preprint, \texttt{arXiv:1106.0486}.

\bibitem{CT}
A. Clay and M. Teragaito,
\textit{Left-orderability and exceptional Dehn surgery on two-bridge knots},
to appear in the Proceedings of Geometry and Topology Down Under,
Contemporary Mathematics Series.

\bibitem{DHY}
J. Dubois, Y. Huynh and Y. Yamaguchi,
\textit{Non-abelian Reidemeister torsion for twist knots},
J. Knot Theory Ramifications \textbf{18} (2009), 303--341.

\bibitem{G}
E. Ghys, 
\textit{Groups acting on the circle},
Enseign. Math. \textbf{47} (2001), 329--407. 

\bibitem{HT0}
R. Hakamata and M. Teragaito,
\textit{Left-orderable fundamental group and Dehn surgery on the knot $5_2$},
preprint, \texttt{arXiv:1208.2087}.

\bibitem{HT1}
R. Hakamata and M. Teragaito,
\textit{Left-orderable fundamental group and Dehn surgery on twist knots},
preprint, \texttt{arXiv:1212.6305}.

\bibitem{HT}
A. Hatcher and W. Thurston,
\textit{Incompressible surfaces in 2-bridge knot complements},
Invent. Math. \textbf{79} (1985), 225--246.

\bibitem{HS}
J. Hoste and P. Shanahan,
\textit{A formula for the A-polynomial of twist knots},
J. Knot Theory Ramifications \textbf{13} (2004), 193--209.

\bibitem{Kh}
V. T. Khoi,
\textit{A cut-and-paste method for computing the Seifert volumes},
Math. Ann. \textbf{326} (2003), 759--801. 


\bibitem{MT}
T. Morifuji and A. T. Tran,
\textit{Twisted Alexander polynomilas of $2$-bridge knots
for parabolic representations},
preprint, \texttt{arXiv:1301.1101}.


\bibitem{OS}
P. Ozsv\'{a}th and Z. Szab\'{o},
\textit{On knot Floer homology and lens space surgeries},
Topology \textbf{44} (2005), 1281--1300. 

\bibitem{Ri}
R. Riley,
\textit{Nonabelian representations of 2-bridge knot groups}, 
Quart. J. Math. Oxford Ser. (2) \textbf{35} (1984), 191--208. 



\bibitem{S}
H. Schubert,
\textit{Knoten mit zwei Br\"{u}cken},
Math. Z. \textbf{65} (1956), 133--170. 

\bibitem{T}
M. Teragaito,
\textit{Left-orderability and exceptional Dehn surgery on twist knots},
to appear in Canad. Math. Bull.

\bibitem{Tr}
A. T. Tran,
\textit{On left-orderable fundamental groups and Dehn surgeries on twist knots},
preprint.



\end{thebibliography}

\end{document}